\newtheorem{lemma}{Lemma}[section]
\newtheorem{theorem}{Theorem}[section]
\newtheorem{remark}{Remark}[section]
\definecolor{violet}{rgb}{0.580,0.,0.827}
\definecolor{purple}{RGB}{150, 0, 255}
\definecolor{orange}{RGB}{255, 150, 0}
\let\emptyset\varnothing
\newtheorem{definition}[theorem]{Definition}
\newtheorem{assump}[theorem]{Assumption}
\numberwithin{equation}{section}
\title[VEM for Quasilinear Problems]{A Virtual Element Method\\ for Quasilinear Elliptic Problems}
\author[A.~Cangiani]{Andrea Cangiani} \address{
A.~Cangiani, Department of  Mathematics,
University of Leicester,
University Road,
Leicester, LE1 7RH,
United Kingdom} 
\email{Andrea.Cangiani@le.ac.uk}
\author[P. Chatzipantelidis]{Panagiotis Chatzipantelidis}
\address{P. Chatzipantelidis, Department of Mathematics and Applied Mathematics, University of Crete, Heraklion, 71003, Crete, Greece}
\email{chatzipa@math.uoc.gr}
\author[G.~Diwan]{Ganesh Diwan}
\address{G. Diwan, Department of Medical Physics \& Biomedical Engineering,
University College London, Gower Street, London WC1E 6BT
}
\email{g.diwan@ucl.ac.uk}
\author[E.~H.~Georgoulis]{Emmanuil H.~Georgoulis} 
\address{
E.~H.~Georgoulis, Department of  Mathematics,
University of Leicester,
University Road,
Leicester, LE1 7RH,
United Kingdom, 
and Department of Mathematics, School of Applied Mathematical and Physical Sciences, National Technical University of Athens, Zografou 15780, Greece} \email{Emmanuil.Georgoulis@le.ac.uk}
\date{}
\begin{document}

\begin{abstract}
A Virtual Element Method (VEM) for the  quasilinear equation 
$-\text{div} (\diff(u)\text{grad} u)=f$
 using general polygonal and polyhedral meshes is presented and analysed. 
The nonlinear coefficient is evaluated with the piecewise polynomial projection of the virtual element ansatz. 
Well-posedness of the discrete problem and optimal order a priori error estimates in the $H^1$- and $L^2$-norm are proven. In addition, the convergence of fixed point iterations for the resulting nonlinear system is established. Numerical tests confirm the optimal convergence properties of the method on general meshes.
\end{abstract}

\maketitle

\section{Introduction}

In this work we present an arbitrary-order conforming Virtual Element Method (VEM)  for the numerical treatment of quasilinear diffusion problems. 
Both two and three dimensional problems are considered and the method is analysed under the same mesh regularity assumption used in the linear setting~\cite{BasicPaper,Confnonconf}, allowing for very general polygonal and polyhedral meshes.  

Virtual element methods for linear elliptic problems are now well-established, see eg.\cite{BasicPaper,ArbitraryRegularityVEM,EquivalentProjectors,General,NonconformingVEM,Confnonconf,Brenner_VEM} and~\cite{Sutton50lines} for a simple implementation. See also~\cite{VEMstability} for an extension to meshes with arbitrarily small edges and~\cite{VEMapost}  where the mesh generality is exploited within an adaptive algorithm driven by rigorous \emph{a posteriori} error estimates.
While the VEM framework has been concurrently extended to a number of different problems and applications,
 the literature on VEM for nonlinear problems is scarce, the same being true for other approaches to polygonal and polyhedral meshes also. 
The Cahn-Hilliard problem is considered in~\cite{CHVEM}, the stationary Navier-Stokes problem in~\cite{NSVEM}, and inelastic problems in~\cite{InelasticVEM}. However, the first two problems are semilinear, while for the (quasilinear) latter problem no analysis is provided. 
The related nodal Mimetic Finite Difference method is analysed in~\cite{AntoniettiBigoniVerani} for elliptic quasilinear problems whereby the nonlinear coefficient depends on the gradient of the solution, however only low-order discretisations are considered. We also mention the arbitrary order Hybrid High-Order method on polygonal meshes for the general class of Leray-Lions elliptic equations~\cite{DiPietroDroniou}, including the problems considered here. 
The HHO method belongs to the class of nonconforming/discontinuous discretisations and is, in fact, related to the Hybrid Mixed Mimetic approach and to the nonconforming VEM~\cite{UnifiedMFD,Bridging2016}. 
In~\cite{DiPietroDroniou}, the convergence of  HHO is proven under minimal regularity assumptions, but the rate of convergence of the method is not analysed.

The VEM presented here is based on the $C^0$-conforming virtual element spaces of~\cite{EquivalentProjectors} whereby the local $L^2$-projection of virtual element functions onto polynomials is available and the VEM proposed in~\cite{Confnonconf} for the discretisation of linear elliptic problems with non-constant coefficients. In particular, to obtain a practical (computable) formulation, the nonlinear diffusion coefficient is evaluated with the element-wise polynomial projection of the virtual element ansatz. 
This results in nonlinear inconsistency errors which have to be additionally controlled. 

We present an a priori analysis of the VEM which builds upon and extends the classical framework introduced by Douglas and Dupont~\cite{DD75} for standard conforming finite element methods.
The analysis relies on the assumption that the nonlinear diffusion coefficient is bounded and Lipschitz continuous and is based on a bootstrapping argument: 1. existence of  solutions for the numerical scheme is shown by a fixed point argument, 2. the $H^1$-norm error is bounded by optimal order terms plus the $L^2$-norm error, 3.  using a standard duality argument and assuming that the discretisation parameter is small enough, the $L^2$-norm error is bounded by optimal order terms plus potentially higher-order terms, 4. based on the existence result, $L^2$-convergence is shown by a compactness argument, and now $H^1$-convergence follows from step 2.
Within this approach, we also obtain optimal order a priori error estimates in the $H^1$- and $L^2$-norms, albeit under the (higher) regularity assumptions needed by the duality argument. 
To the best of our knowledge, this work provides the first optimal order error estimate for a conforming discretisation of quasilinear problems on general polygonal and polyhedral meshes.


To simplify the presentation, we consider homogeneous Dirichlet boundary value problems only. To this end, we introduce the model  quasilinear  elliptic problem
\begin{equation}\label{eq:pde}
  -\nabla\cdot(\diff(\u)\nabla \u)  = f(\vx) \ \text{ in }~\D, \quad \text{ with }\quad
  \u = 0\ \text{ on }~\dD,
\end{equation}
where  $\D \subset \Re^\spacedim$ is a convex polygonal or polyhedral domain for $\spacedim=2$ or $\spacedim=3$, respectively.
The diffusion coefficient is a twice differentiable function $\diff:\Re\rightarrow [\elipLower ,\elipUpper]$ such that  $0<\elipLower \le \elipUpper <+\infty$, and with bounded derivatives up to second order.  
Therefore $\diff$ is Lipschitz continuous, namely there exists a positive constant $L$ such that
\begin{equation}
	|\diff(t)-\diff(s)|  \leq L|t-s|,\qquad \text{for a.e } t,s\in \Re.
	\label{eq:diffLip}
\end{equation}
Writing \eqref{eq:pde} in variational form, we seek $\u \in H^1_0(\D)$ such that
\begin{equation}
	\A(\u;\u,\v):=(\diff(\u) \nabla \u, \nabla \v) = (f, \v), \quad \quad \forall \v \in H^1_0(\D),
	\label{eq:origVariationalForm}
\end{equation}
with $(\cdot,\cdot)$ denoting the standard $L^2$ inner-product. 
It is well known that for sufficiently smooth $f$, problem \eqref{eq:pde} possesses a unique solution $u$, see eg. \cite{DDS71}.

The remainder of this work is structured as follows. 
We introduce the virtual element method in~\sref{sec:vem}. The method is then analysed in~\sref{sec:analysis}, where the well-posedness and a priori analysis are presented. 
In~\sref{sec:iteration} we establish the convergence of fixed point iterations for the solution of the nonlinear system resulting from the VEM discretisation. 
We present a numerical test in~\sref{sec:num} and, finally, we provide some conclusions in~\sref{sec:conc}. 

We use standard notation for the relevant function spaces. For a Lipschitz domain $\omega \subset {\mathbb R}^d$, $d=2,3$, we denote by $|\omega|$ its $d$--dimensional Hausdorff measure.
Further, we denote by $H^s(\omega)$ the Hilbert space of index $s\ge 0$ of real--valued functions defined on $\omega$, endowed with the seminorm $|\cdot |_{s,\omega}$ and norm $\|\cdot\|_{s,\omega}$; further $(\cdot,\cdot)_\omega$ stands for the standard $L^2$-inner-product. The domain of definition will be omitted when this coincides with $\Omega$, eg. $|\cdot |_{s}:=|\cdot |_{s,\Omega}$ and so on. Finally, for $\ell\in {\mathbb N}\cup \{0\}$, we denote by ${\mathbb P}_{\ell}(\omega)$ the space of all polynomials of degree up to $\ell$.

\section{The Virtual Element Method}
\label{sec:vem}

We introduce the virtual element method for the discretisation of problem~\eqref{eq:origVariationalForm}, using general polygonal and polyhedral decompositions of $\Omega$ in two and three dimensions, respectively. 
We start by recalling the definition of the virtual element spaces from~\cite{EquivalentProjectors,Confnonconf}.

\subsection{The Discrete Spaces}\label{sec:vemSpaces}
The definition of the virtual element method relies on the availability of certain local projector operators based 
on accessing the degrees of freedom. The choice of degrees of freedom for the virtual element spaces is thus 
important.
\begin{definition}[Degrees of freedom]
  \label{def:dofs}
  Let $\omega\subset\Re^\spacedim$, $1\le \spacedim\le 3$, be a $\spacedim$-dimensional
  polytope, that is, a line segment, polygon, or polyhedron,
  respectively.  For any regular enough function $v$ on $\omega$, we define the following sets of \emph{degrees of freedom}:
  \begin{itemize}
  \item \emph{Nodal values}. For a vertex $\vertex$
    of $\omega$, $\NO{\omega}_\vertex (v):=v(\vertex)$ and
    $\NO{\omega}:=\{\NO{\omega}_\vertex: \vertex \text{ is a
      vertex}\}$;
  \item \emph{Polynomial moments}. For $l\ge
    0$,
    \begin{equation*}
      \MO{\omega}{\mindex}(v)=\frac{1}{\abs{\omega}} (v, \ma)_\omega
      \quad \text{ with}\quad m_{\mindex}
      :=
      \(\frac{ \vx - \vx_{\omega}}{h_{\omega}}\)^{\mindex} \text{ and}\quad \abs{\mindex}\le l,
    \end{equation*}
    where $\mindex$ is a multi-index with $\abs{\mindex} := \alpha_1
    +\cdots +\alpha_\spacedim$ and $\vx^{\mindex} := x_1^{\alpha_1} \dots
    x_\spacedim^{\alpha_\spacedim}$ in a local coordinate system, and $\vx_\omega$
    denoting the barycentre of $\omega$. Further,
    $\MO{\omega}{l}:=\{\MO{\omega}{\mindex}:\abs{\mindex}\le l\}$. The
    definition is extended to $l=-1$ by setting
    $\MO{\omega}{-1}:=\emptyset$.
  \end{itemize}
\end{definition}

Let $\{\Th\}_h$ be a sequence of decompositions of  $\Omega$ into non-overlapping and not self-intersecting polygonal/polyhedral elements such that the diameter of any $\E\in\Th$ is bounded by $h$. 

On $\Th$, we introduce element-wise projectors as follows.
We denote by ${\Po{\ell}}\equiv \Po{\ell,\E} : {L^2(\E)} \rightarrow \PE{\ell}$, $\ell\in\Na$, the standard $L^2(\E)$-orthogonal projection onto the polynomial space $\PE{\ell}$.
With slight abuse of notation, the symbol ${\Po{\ell}}$ will also be used to denote the global operator obtained from the piecewise projections.
%
Similarly, by $\vPo{\ell}\equiv\vPo{\ell,E} $, $\ell\in\Na$, we denote the orthogonal projection of $(L^2(E))^d$ onto the  space $\vPE{\ell}=(\PE{\ell})^\spacedim$, obtained by applying $\Po{\ell,\E}$ component-wise.
Further,  we consider the projection $\PN{\ell}\equiv \PN{\ell,E}:H^1(E)\to \PE{\ell}$, for $\ell\in\Na$,
associating any $v\in H^1(\E)$ with the element in $\PE{\ell}$ such that
\begin{equation}
	\label{eq:h1ProjDefn}
	(\nabla \PN{\ell} v, \nabla p)_{\E} = (\nabla v, \nabla p)_{\E}, \quad \forall p \in \PE{\ell},
\end{equation}
with, in order to uniquely determine $\PN{\ell}$, the additional condition:
\begin{align}\label{eq:h1ProjDefn-2}
\begin{cases}
	\displaystyle\intdE \big(v - \PN{\ell} v\big) \ds = 0  & \text{if } \ell = 1, \\[1em]
	\displaystyle\intE  \big(v - \PN{\ell} v\big) \dx = 0  & \text{if } \ell \geq 2.
\end{cases}
\end{align}

Let $k\ge 1$ be given, characterising the order of the method. We follow the construction of the corresponding $C^0$-conforming VEM space  presented in~\cite{EquivalentProjectors} to ensure that 
all of the above projectors, to be utilised in the definition of the method, are computable.

We first introduce the local spaces on each element $\E$ of $\Th$, for $\spacedim = 2$.
Let $B^2_{\k}(\dE)$  be the space defined on the boundary of $\E$ in the following way
\begin{equation*}
	B^2_{\k}(\dE) := \left\{ \v \in C^0(\dE) : \v|_\e \in \P{\e}{\k} \text{ for each edge } \e \text{ of } \dE \right\}.
\end{equation*}

We define the local virtual element space $\VhE$ by
\begin{align*}
	\VhE := \{ \vh \in H^1(\E) : \,&\vh|_{\dE} \in B^2_{\k}(\dE); \,\, \Delta \vh \in \PE{\k}  \\
		&\text{ and } (\vh - \PN{k}  \vh, p )_{\E} = 0, \,\forall p \in \ME{\k} \setminus \ME{\k-2} \}.
\end{align*}

In \cite{EquivalentProjectors} it is shown that the following degrees of freedom (DoF) uniquely determine the elements of $\VhE$:\begin{equation}\label{eq:dof2}
\text{DoF}(\VhE)
:=
\NO{\E}
\cup
\{\MO{\e}{\k-2}: \text{ for each edge } \e\in\dE\}
\cup
\MO{\E}{\k-2}.
\end{equation} 

The global conforming space $\Vh$ is  obtained from the local spaces $\VhE$ as
\begin{equation*}
	\Vh := \left\{ \vh \in H^1_0(\D) : \vh |_{\E} \in \VhE, \quad \forall \E \in \Th \right\},
\end{equation*}
with degrees of freedom given in agreement with the local degrees of freedom~\eqref{eq:dof2}.

The construction of the space for $\spacedim=3$ is similar, although now we define the boundary space to be
\begin{equation*}
	B^3_{\k}(\dE) := \left\{ \v \in C^0(\dE) : \v|_\f \in \Vhf \text{ for each face } \f \text{ of } \dE \right\},
\end{equation*}
where $\Vhf$ is the two-dimensional conforming virtual element space of the same degree $\k$ on the face $\f$. The local virtual element space is defined to be
\begin{align*}
	\VhE := \{ \v \in H^1(\E) : \,&\v|_{\dE} \in B^3_{\k}(\dE); \,\, \Delta \v \in \PE{\k};  \\
		&\text{ and } (\v - \PN{\k} \v, p )_{\E} = 0, \,\,\forall p \in \ME{\k} \setminus \ME{\k-2} \}.
\end{align*}
with degrees of freedom 
\begin{equation}\label{eq:dofs3d}
\text{DoF}(\VhE)
:=
\NO{\E}
\cup
\{\MO{\s}{\k-2} \text{ for each edge \emph{and} face } \s\in\dE\}
\cup
\MO{\E}{\k-2}.
\end{equation}
Finally, the global space and the set of global degrees of freedom for $\spacedim=3$ are constructed from these in the obvious way, completely analogously to the case for $\spacedim=2$.

The following are well established properties of the virtual element spaces introduced above~\cite{BasicPaper,EquivalentProjectors,Confnonconf}:
\begin{itemize}
\item For each $\E\in\Th$, we have  $\PE{\k}\subset\VhE $ as a subspace;
\item For each $\E\in\Th$ and $\v\in\VhE$, the $H^1$-projector $\PN{k,E}\v$ and $L^2$-projectors $\Po{k,\E}\v$ and $\vPo{k-1,E}\nabla\v$ are computable just by accessing the local DoFs of $\v$ given by~\eqref{eq:dof2} and~\eqref{eq:dofs3d} in the two and three dimensional case, respectively.
\item The global virtual element space $\Vh\subset H^1_0(\Omega)$ as a finite dimensional subspace.
\end{itemize}

\subsection{Virtual element method}
The virtual element method of order $k\ge 1$ for the discretisation of~\eqref{eq:pde} reads: find $\uh\in \Vh$ such that 
\begin{equation}\label{vem}
a_h(\uh;\uh,\vh)=(\Po{\k-1} f,\vh), \quad\forall \vh\in V_h, 
\end{equation}
where $a_h(\cdot;\cdot,\cdot)$ is \emph{any} bilinear form on $\Vh$ defined as the sum of elementwise contributions $a_h^\E(\cdot;\cdot,\cdot)$ satisfying the following assumption~\cite{BasicPaper}.
\begin{assump}\label{ass:bilinearForms}
	For every $\E\in\Th$, the form $\ahE(\cdot;\cdot, \cdot)$ is bilinear and symmetric in its second and third arguments and satisfies the following properties:
	\begin{itemize}
		\item \emph{Polynomial consistency}:
			For all $p \in \PE{\k}$ and $\vh \in \VhE$, 
			\begin{equation}\label{eq:consistency}
				\ahE(\z;p, \vh) = \intE \diff(\Po{}\z) \nabla p\cdot (\vPo{} \nabla \vh) \dx, 	\quad\forall \z\in L^2(E),		\end{equation}
				where $\Po{}=\Po{\k}$ and $\vPo{}=\vPo{\k-1}$.
                          		\item \emph{Stability}:
			There exist positive constants $\diffStabLower, \diffStabUpper$, independent of $h$ and the mesh element $\E$ such that, for all $\vh,\zh \in \VhE$,
			\begin{equation}\label{stability}
				\diffStabLower \aE(\zh;\vh, \vh) \leq \, 
				\ahE(\zh;\vh, \vh) \leq \diffStabUpper \aE(\zh;\vh, \vh), 
				\end{equation}
				with $\aE(\z;\v,\w)=(\diff(\z)  \nabla \v, \nabla \w)_{\E}$, for all $\z\in L^\infty(\Omega)$ and $\v,\w\in H^1(\Omega)$. 
	\end{itemize}
\end{assump}

\begin{remark}
The above defining conditions are essentially those introduced  in the linear setting~\cite{BasicPaper,ArbitraryRegularityVEM,EquivalentProjectors,General,Confnonconf} with, crucially, the nonlinear diffusion coefficient $\diff$ evaluated with the polynomial projection of the argument. We note also that the symmetry and stability assumptions imply the continuity  in $\Vh$ of the form $\ah(\z;\cdot, \cdot)$, for $\z\in\Vh$.
\end{remark}
\begin{remark}
The particular choice of local bilinear forms used in the numerical tests is given below in~\sref{sec:num}. We remark, however, that the following error analysis is valid whenever the assumption above is  satisfied.
\end{remark}

\section{Error Analysis}
\label{sec:analysis}

We recall that $k\ge 1$ is a fixed natural number representing the order of accuracy of the method~\eqref{vem}.

The convergence and a priori error analysis of the VEM relies on the availability of the following best approximation results. 
\subsection{Approximation Properties}
\label{subsec:approximation-properties}
We recall the optimal approximation properties of the VEM space $\Vh$ introduced above. These where established in a series of papers \cite{BasicPaper,EquivalentProjectors,VEMapost} under the following assumption on the regularity of the decomposition $\Th$.
\begin{assump}{(Mesh Regularity).}
	\label{ass:meshReg}
	We assume the existence of a constant $\meshReg > 0$ such that
	\begin{itemize}
		\item for every element $\E$ of $\Th$ and every edge/face $\e$ of $\E$, $h_{\e} \geq \meshReg h_{\E}$
		\item every element $\E$ of $\Th$ is star-shaped with respect to a ball of radius $\meshReg h_{\E}$
		\item for $\spacedim=3$, every face $\e \in \Edges$ is star-shaped with respect to a ball of radius $\meshReg h_{\e}$,
	\end{itemize}
	where $h_e$ is the diameter of the edge/face $\e$ of $\E$ and $h_E$ is the diameter of $\E$.
\end{assump}

The above star-shapedness assumption can be relaxed by including elements which are union of star-shaped domains~\cite{BasicPaper}. In particular, the following polynomial approximation result~\cite{BrennerScott} is extended to more general shaped elements in~\cite{ScottDupont} and the interpolation error bound below can be generalised by modifying the proof in~\cite{VEMapost}, see also~\cite{Oliver}.
\begin{theorem}[Approximation using polynomials]
	\label{thm:polynomialApproximation}
	Suppose that Assumption~\ref{ass:meshReg} is satisfied and let $s$ be a positive integer such that $1 \leq s \leq \k+1$. Then, for any $\w \in H^{s}(\E)$ there exists a polynomial $\w_\pi \in \PE{\k}$ such that
	\begin{equation*}
		\norm{\w - \w_\pi }_{0,\E} + h_{\E} \norm{\nabla(\w - \w_\pi )}_{0,\E} \leq Ch_{\E}^s \abs{\w}_{s,\E}.
	\end{equation*}
	Moreover, we have
	\begin{equation*}
	\norm{\nabla(\w - \w_\pi )}_{L^6(\E)} \leq C \abs{\w}_{W^{1,6}(\E)}.
	\end{equation*}
	In the above bounds, $C$ are positive constants depending only on $\k$ and on $\meshReg$.  
\end{theorem}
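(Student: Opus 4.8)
The statement is a standard polynomial approximation result on star-shaped elements, so the plan is to invoke the classical theory of averaged Taylor polynomials (Bramble–Hilbert / Dupont–Scott) rather than reprove it from scratch. First I would fix an element $\E\in\Th$ and, using the star-shapedness hypothesis of Assumption \ref{ass:meshReg}, let $B\subset\E$ be the ball of radius $\meshReg h_\E$ with respect to which $\E$ is star-shaped. For $\w\in H^s(\E)$ with $1\le s\le \k+1$ I would take $\w_\pi$ to be the Taylor polynomial of degree $\k$ of $\w$ averaged over $B$ (which is well-defined for $H^s$ functions by density, since $s\ge 1$). The Dupont–Scott theory \cite{ScottDupont} then yields, on any domain that is star-shaped with respect to $B$, the bound
\begin{equation*}
  |\w-\w_\pi|_{m,\E}\le C\, h_\E^{\,s-m}\,|\w|_{s,\E},\qquad 0\le m\le s,
\end{equation*}
with $C$ depending only on $s$, the dimension $\spacedim$, and the \emph{chunkiness parameter} of $\E$, which under Assumption \ref{ass:meshReg} is controlled purely in terms of $\meshReg$ (the ratio $h_\E/\operatorname{diam}(B)\le 1/\meshReg$). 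Taking $m=0$ and $m=1$ and adding the two estimates gives exactly the first displayed inequality.

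For the second estimate, involving the $L^6$-norm of the gradient, the approach is the same but with Sobolev rather than Hilbert scales: the Dupont–Scott estimates hold on $W^{m,p}$ for general $1\le p\le\infty$, so applying them with $p=6$, $s=1$, $m=1$ (the borderline case where no power of $h_\E$ is gained, consistent with the scaling of the $W^{1,6}$-seminorm) gives
\begin{equation*}
  \|\nabla(\w-\w_\pi)\|_{L^6(\E)} = |\w-\w_\pi|_{W^{1,6}(\E)} \le C\,|\w|_{W^{1,6}(\E)},
\end{equation*}
again with $C=C(\k,\meshReg)$. One should note that the averaged Taylor polynomial of degree $\k\ge 1$ used here is the \emph{same} $\w_\pi$ as above (its definition requires only that $\w\in W^{1,1}$), so both bounds hold simultaneously for a single choice of polynomial.

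The only genuine point requiring care — and the part I would state explicitly rather than gloss over — is the uniformity of the constant $C$ over the whole mesh sequence $\{\Th\}_h$: one must check that the chunkiness parameter entering the Dupont–Scott bound, namely the ratio of $\operatorname{diam}(\E)$ to the radius of the largest inscribed ball with respect to which $\E$ is star-shaped, is bounded above by a quantity depending only on $\meshReg$. This is immediate from Assumption \ref{ass:meshReg}, since that radius is at least $\meshReg h_\E$ by hypothesis and $\operatorname{diam}(\E)=h_\E$ by definition, giving chunkiness at most $1/\meshReg$. With this observation the constants are mesh-independent, and as remarked in the excerpt the generalisation to elements that are finite unions of such star-shaped domains follows by the extension of the Dupont–Scott theory in \cite{ScottDupont} and the interpolation argument of \cite{VEMapost}.
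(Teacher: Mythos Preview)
Your proposal is correct and is precisely the standard argument underlying the references the paper invokes. Note, however, that the paper does not actually give a proof of this theorem: it simply states the result and cites \cite{BrennerScott} and \cite{ScottDupont} for the Bramble--Hilbert/Dupont--Scott theory on star-shaped domains, together with \cite{VEMapost,Oliver} for the extension to unions of star-shaped elements. Your sketch (averaged Taylor polynomial over the star-shapedness ball, chunkiness bounded by $1/\meshReg$, $W^{m,p}$ version with $p=6$ for the second estimate) is exactly what those references contain, so there is nothing to compare---you have supplied the details that the paper leaves to citation.
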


The approximation properties of the virtual element space are characterised by the following interpolation error bound, whose proof can be found in~\cite{VEMapost}.

\begin{theorem}[Approximation using virtual element functions]
	\label{thm:spaceApproximation}
	Suppose that Assumption~\ref{ass:meshReg} is satisfied and let $s$ be a positive integer such that $1 \leq s \leq \k+1$. Then, for any $\w \in H^s(\D)$, there exists an element $w_I \in \Vh$ such that
	\begin{equation*}
		\norm{\w - w_I} + h \norm{\nabla(\w - w_I)} \leq Ch^s \abs{\w}_{s}
	\end{equation*}
	where $C$ is a positive constant which depends only on $\k$ and $\meshReg$.
\end{theorem}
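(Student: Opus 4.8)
The plan is to take for $w_I$ the canonical degree-of-freedom interpolant of $w$, and then to bound the interpolation error element by element by combining the polynomial approximation of Theorem~\ref{thm:polynomialApproximation} with a local stability estimate for the interpolation operator; squaring and summing the resulting local bounds over $\Th$ then yields the global estimate. Concretely, assume first $s\ge 2$, so that the Sobolev embedding $H^s(\E)\hookrightarrow C^0(\overline{\E})$ holds for $\spacedim=2,3$ and the nodal values of $w$ are meaningful. On each $\E\in\Th$ I would define $I_h^\E w\in\VhE$ to be the unique element of $\VhE$ whose degrees of freedom~\eqref{eq:dof2} (for $\spacedim=2$) or~\eqref{eq:dofs3d} (for $\spacedim=3$) coincide with those of $w$; unisolvence makes this well posed. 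Since the degrees of freedom attached to an edge or face are common to the two elements meeting there and $w$ is single-valued, the local pieces glue into a function $w_I$ with $w_I|_\E=I_h^\E w$ belonging to $H^1(\Omega)$; and as $w\in H^1_0(\Omega)$ — which is the relevant case since $\Vh\subset H^1_0(\Omega)$ — its boundary degrees of freedom vanish, so in fact $w_I\in\Vh$. It then suffices to prove, for each $\E$, the local estimate $\norm{w-w_I}_{0,\E}+h_\E\norm{\nabla(w-w_I)}_{0,\E}\le C h_\E^{s}\abs{w}_{s,\E}$.

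For this local estimate I would pick, via Theorem~\ref{thm:polynomialApproximation}, a polynomial $w_\pi\in\PE{\k}$ with $\norm{w-w_\pi}_{0,\E}+h_\E\norm{\nabla(w-w_\pi)}_{0,\E}\le C h_\E^{s}\abs{w}_{s,\E}$. Because $\PE{\k}\subset\VhE$ and the interpolant reproduces polynomials, $I_h^\E w_\pi=w_\pi$, so on $\E$ one has $w-w_I=(w-w_\pi)-I_h^\E(w-w_\pi)$. The first term is already controlled, so everything reduces to the local stability bound $\norm{I_h^\E\eta}_{0,\E}+h_\E\abs{I_h^\E\eta}_{1,\E}\le C(\norm{\eta}_{0,\E}+h_\E\abs{\eta}_{1,\E})$ applied to $\eta=w-w_\pi$, whose right-hand side is in turn $\le C h_\E^{s}\abs{w}_{s,\E}$. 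To prove this bound I would rescale $\E$ to unit diameter — under Assumption~\ref{ass:meshReg} the rescaling has uniformly bounded distortion, so all constants depend only on $\meshReg$ and $\k$ — and use the scaled norm equivalence $\norm{\varphi}_{1,\E}\le C\,\|\varphi\|_{\mathrm{dof},\E}$ valid for $\varphi\in\VhE$, where $\|\cdot\|_{\mathrm{dof},\E}$ is the appropriately $h$-scaled Euclidean norm of the degree-of-freedom vector. Applying this with $\varphi=I_h^\E\eta$, whose degree-of-freedom vector equals that of $\eta$, the task becomes to bound the individual degrees of freedom of $\eta$: the interior polynomial moments by $\norm{\eta}_{0,\E}$ via Cauchy--Schwarz, the edge/face moments by $\norm{\eta}_{0,\dE}$ and hence by $\norm{\eta}_{0,\E}+h_\E\abs{\eta}_{1,\E}$ through a scaled trace inequality, and the nodal values by the same quantity through a scaled trace/Sobolev embedding.

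I expect two points to carry the real weight. The first is the scaled norm equivalence on $\VhE$, uniform over the very general meshes allowed by Assumption~\ref{ass:meshReg}: this is where star-shapedness is genuinely used, via inverse and trace inequalities for polynomials on star-shaped domains combined with the defining properties of $\VhE$ (namely $\Delta\varphi\in\PE{\k}$, the prescribed piecewise-polynomial boundary values, and the internal moment conditions). The second is the endpoint $s=1$: then the nodal values of $w$ are not even defined and the construction above fails; the remedy is to replace $I_h^\E$ by a Cl\'ement/Scott--Zhang-type quasi-interpolant built from local $L^2$-projections onto vertex-patch degree-of-freedom functionals, for which the stability estimate holds with no point evaluations, leaving the splitting through $w_\pi$ and the summation over $\Th$ untouched. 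These are precisely the ingredients assembled in the proof given in~\cite{VEMapost}.
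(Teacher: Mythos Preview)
The paper does not actually prove this theorem in the text: it states the result and immediately defers to~\cite{VEMapost} for the proof. So there is no in-paper argument to compare against; your sketch is a reconstruction of the referenced proof, as you yourself acknowledge in your final sentence.

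The overall plan---take the degree-of-freedom interpolant, split through $w_\pi$, invoke polynomial reproduction and a local stability estimate for $I_h^\E$, and fall back on a Cl\'ement/Scott--Zhang construction when $s=1$---is the standard one and is correct in outline. There is, however, a genuine gap in your stability step. You assert that the nodal values of $\eta=w-w_\pi$ are controlled by $\norm{\eta}_{0,\E}+h_\E\abs{\eta}_{1,\E}$ ``through a scaled trace/Sobolev embedding'', and hence that the stability bound $\norm{I_h^\E\eta}_{0,\E}+h_\E\abs{I_h^\E\eta}_{1,\E}\le C(\norm{\eta}_{0,\E}+h_\E\abs{\eta}_{1,\E})$ holds. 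In dimension $d\ge 2$ this is false: $H^1(\E)$ does not embed into $C^0(\overline{\E})$, and tracing first to an edge only lands you in $H^{1/2}$, which still does not control point values. (In three dimensions the edge moments have the same problem, being integrals over codimension-two sets.) The stability bound as you wrote it therefore cannot hold for general $H^1$ functions.

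The fix is to retain the higher-order term. On a unit-diameter reference element one has $\abs{\hat\eta(\hat\nu)}\le\norm{\hat\eta}_{L^\infty(\hat\E)}\le C\norm{\hat\eta}_{H^2(\hat\E)}$ by the Sobolev embedding valid for $d=2,3$, and after scaling back the degree-of-freedom vector of $\eta$ is controlled by $\sum_{j=0}^{2}h_\E^{\,j-d/2}\abs{\eta}_{j,\E}$. Each of these is in turn bounded by $Ch_\E^{\,s-d/2}\abs{w}_{s,\E}$ using the full Bramble--Hilbert estimate for $w-w_\pi$ (not just the two lowest-order cases displayed in Theorem~\ref{thm:polynomialApproximation}). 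Combined with the scaled norm equivalence $\norm{\varphi}_{0,\E}+h_\E\abs{\varphi}_{1,\E}\le Ch_\E^{d/2}\norm{\varphi}_{\mathrm{dof},\E}$ on $\VhE$, the powers of $h_\E$ match and the local estimate follows. With this amendment the remainder of your argument stands.
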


Let $\vep:L^2(\Omega)\times \Vh\to\mathbb{R}$ denote the bilinear form 
\begin{equation}\label{vep-def}
\vep(f,\vh)=({\Po{\k-1}}f-f,\vh),\quad\forall\vh\in\Vh.
\end{equation}
Then, using the fact that ${\Po{\k-1}}f$ is the $L^2$ projection on $\PE{\k-1}$, we can show the following lemma.
\begin{lemma}\label{lemma-vep}
For $f\in H^{s}(\Omega)$, $0\le s\le \k$, there exists a positive constant $C$, independent of $h$ and of $f$, such that  
\begin{equation}
|\vep(f,\vh)|\le Ch^{s+j}\|f\|_{s}\,\|\nabla^j\vh\|,\quad\forall\vh\in \Vh,\ j=0,1.
\end{equation}
\end{lemma}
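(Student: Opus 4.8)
The plan is to work elementwise and exploit the $L^2$-orthogonality built into $\Po{\k-1}$, together with the polynomial approximation estimate of Theorem~\ref{thm:polynomialApproximation} (in the version for projection onto $\PE{\k-1}$) and, for the case $j=1$, a Poincar\'e inequality on the mesh elements. First I would write $\vep(f,\vh)=\sum_{\E\in\Th}(\Po{\k-1}f-f,\vh)_\E$ and note that, since $\k\ge1$, the constants on $\E$ belong to $\PE{\k-1}$, so the defining property of the $L^2$-projection allows us to subtract from $\vh$ any elementwise constant: for $j=1$ I subtract the mean $\bar\vh_\E:=\abs{\E}^{-1}(\vh,1)_\E$, writing $(\Po{\k-1}f-f,\vh)_\E=(\Po{\k-1}f-f,\vh-\bar\vh_\E)_\E$, while for $j=0$ I leave $\vh$ untouched.

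Next I would apply the Cauchy--Schwarz inequality on each element. For the factor involving $f$ I use $\|\Po{\k-1}f-f\|_{0,\E}\le C h_\E^{s}\|f\|_{s,\E}$: when $1\le s\le\k$ this follows from the best-approximation property of the $L^2$-projection combined with the $\PE{\k-1}$-analogue of Theorem~\ref{thm:polynomialApproximation}, and when $s=0$ it is simply $\|\Po{\k-1}f-f\|_{0,\E}\le\|\Po{\k-1}f\|_{0,\E}+\|f\|_{0,\E}\le 2\|f\|_{0,\E}$, using that $\Po{\k-1}$ is an $L^2$-contraction. For the factor involving $\vh$, when $j=0$ I keep $\|\vh\|_{0,\E}$, and when $j=1$ I invoke $\|\vh-\bar\vh_\E\|_{0,\E}\le C h_\E\|\nabla\vh\|_{0,\E}$, which holds uniformly over elements that are star-shaped with respect to a ball of radius $\meshReg h_\E$ by Assumption~\ref{ass:meshReg}. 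Collecting these bounds yields $|(\Po{\k-1}f-f,\vh)_\E|\le C h_\E^{s+j}\|f\|_{s,\E}\|\nabla^j\vh\|_{0,\E}$ for $j=0,1$.

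Finally I would bound $h_\E\le h$, sum over $\E\in\Th$, and apply the discrete Cauchy--Schwarz inequality to obtain $|\vep(f,\vh)|\le C h^{s+j}\big(\sum_{\E\in\Th}\|f\|_{s,\E}^2\big)^{1/2}\big(\sum_{\E\in\Th}\|\nabla^j\vh\|_{0,\E}^2\big)^{1/2}= C h^{s+j}\|f\|_{s}\|\nabla^j\vh\|$, which is the claimed estimate. I do not expect a genuine obstacle here; the only points requiring a little care are the uniformity in the mesh of the $\PE{\k-1}$-approximation bound and of the Poincar\'e constant, and both follow from the mesh regularity Assumption~\ref{ass:meshReg} exactly as in the linear theory.
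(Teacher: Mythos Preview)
Your proposal is correct and follows exactly the standard route implicit in the paper's one-line justification (``using the fact that $\Po{\k-1}f$ is the $L^2$ projection on $\PE{\k-1}$''): elementwise orthogonality to subtract the mean when $j=1$, the $\PE{\k-1}$ approximation bound for the $f$-factor, and Poincar\'e plus discrete Cauchy--Schwarz to assemble the global estimate. One cosmetic remark: for $s=0$ you can get $\|f-\Po{\k-1}f\|_{0,\E}\le\|f\|_{0,\E}$ directly by Pythagoras rather than the triangle inequality, but this does not affect the argument.
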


\bigskip
\subsection{Existence} We first show the existence of a solution $\uh$ of \eqref{vem} using a fixed point argument. To this end, for  $M>0$, we let 
$
{\mathcal B}_M=\{\vh\in \Vh:\|\nabla\vh\|\le M\}.
$
\begin{theorem} \label{thm-existence}
Let $f \in L^2(\Omega)$ be given and assume that~\eqref{eq:diffLip} holds.
Choose $M>0$ such that  $\|f\|\le M{c_*}$, $c_*=\elipLower\diffStabLower$
where  $\diffStabLower$ is the lower bound constant 
in  \eqref{stability}. Then, there exists a solution $\uh\in{\mathcal B}_M\subset \Vh$ of~\eqref{vem}.
\end{theorem}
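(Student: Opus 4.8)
The plan is to apply Brouwer's fixed point theorem on the closed, convex, bounded set $\mathcal{B}_M \subset V_h$. First I would define a map $T:\mathcal{B}_M\to V_h$ as follows: for $z_h\in\mathcal{B}_M$, let $T z_h=w_h\in V_h$ be the unique solution of the \emph{linear} problem
\begin{equation*}
a_h(z_h; w_h, v_h) = (\Po{k-1}f, v_h), \qquad \forall v_h\in V_h.
\end{equation*}
This linear problem is well-posed because, by the stability bound \eqref{stability} together with the lower bound $\diff\ge\elipLower$ and the Poincaré--Friedrichs inequality, the form $a_h(z_h;\cdot,\cdot)$ is coercive on $V_h$ (uniformly in $z_h$), and it is continuous by the remark following Assumption~\ref{ass:bilinearForms}; hence Lax--Milgram applies. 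Any fixed point of $T$ is, by construction, a solution of \eqref{vem}.

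Next I would check that $T$ maps $\mathcal{B}_M$ into itself. Testing the defining equation with $v_h=w_h=Tz_h$ and using the lower stability bound gives
\begin{equation*}
c_* \|\nabla w_h\|^2 \le \diffStabLower\, \aE(z_h;w_h,w_h) \le a_h(z_h;w_h,w_h) = (\Po{k-1}f, w_h) \le \|\Po{k-1}f\|\,\|w_h\| \le \|f\|\,\|w_h\|,
\end{equation*}
using that $L^2$-projection is a contraction and then Poincaré--Friedrichs $\|w_h\|\le C_P\|\nabla w_h\|$; absorbing $C_P$ into the choice of $M$ (or, as the statement phrases it, choosing $M$ so that $\|f\|\le M c_*$ in the appropriate normalisation), we conclude $\|\nabla w_h\|\le M$, i.e. $w_h\in\mathcal{B}_M$.

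Finally I would establish continuity of $T$, which is the main technical point. Given $z_h^{(1)},z_h^{(2)}\in\mathcal{B}_M$ with images $w_h^{(1)},w_h^{(2)}$, subtract the two defining equations and test with $v_h = w_h^{(1)}-w_h^{(2)}$; the right-hand sides cancel, leaving
\begin{equation*}
a_h\big(z_h^{(1)}; w_h^{(1)}-w_h^{(2)}, w_h^{(1)}-w_h^{(2)}\big) = a_h\big(z_h^{(2)}; w_h^{(2)}, w_h^{(1)}-w_h^{(2)}\big) - a_h\big(z_h^{(1)}; w_h^{(2)}, w_h^{(1)}-w_h^{(2)}\big).
\end{equation*}
The left side is bounded below by $c_*\|\nabla(w_h^{(1)}-w_h^{(2)})\|^2$. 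For the right side one must bound the difference of the forms in their first slot; since each $\ahE$ depends on $z_h$ only through $\diff(\Po{}z_h)$ and $\diff$ is Lipschitz \eqref{eq:diffLip}, continuity of the $L^2$-projection and an inverse inequality (valid on $\Vh$ under Assumption~\ref{ass:meshReg}) give a bound of the form $C(h)\|\nabla(z_h^{(1)}-z_h^{(2)})\|\,\|\nabla w_h^{(2)}\|\,\|\nabla(w_h^{(1)}-w_h^{(2)})\|$, with $\|\nabla w_h^{(2)}\|\le M$. Dividing through yields $\|\nabla(w_h^{(1)}-w_h^{(2)})\|\le C(h) M c_*^{-1}\|\nabla(z_h^{(1)}-z_h^{(2)})\|$, so $T$ is (Lipschitz-)continuous on the finite-dimensional set $\mathcal{B}_M$. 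The hard part is exactly this last estimate: making precise how the Lipschitz dependence of $\diff(\Po{}z_h)$ on $z_h$ translates, via the structural Assumption~\ref{ass:bilinearForms}, into a bound on $|a_h(z_h^{(1)};\cdot,\cdot)-a_h(z_h^{(2)};\cdot,\cdot)|$ — one needs to use the polynomial consistency part to control the consistent piece and then argue that the stabilisation piece inherits the same Lipschitz behaviour, or alternatively note that for the existence result a continuity estimate with an $h$-dependent constant suffices. With $T$ continuous from the compact convex set $\mathcal{B}_M$ into itself, Brouwer's theorem gives a fixed point $u_h\in\mathcal{B}_M$, which is the desired solution of \eqref{vem}.
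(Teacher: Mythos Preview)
Your approach is essentially the same as the paper's: define the solution map $T$ via the linearised problem, show $T(\mathcal{B}_M)\subset\mathcal{B}_M$ from the stability bound, and apply Brouwer. The paper's proof is in fact terser than yours: it does \emph{not} verify continuity of $T$ explicitly, simply invoking Brouwer once the self-map property is established. So your proposal is correct and, if anything, more careful than the published argument on the continuity step, which you rightly identify as the subtle point (under the abstract Assumption~\ref{ass:bilinearForms} alone, continuity of $a_h$ in its first argument is not automatic and one has to appeal either to the concrete form of the stabilisation or to finite-dimensionality with an $h$-dependent constant, as you suggest).
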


\begin{proof}
We devise a fixed point iteration for \eqref{vem}:
for a fixed $f\in L^2(\Omega)$, consider an iteration map $T_h:\Vh\to \Vh$
given by
\begin{equation}\label{elliptic-iteration}
a_h(\vh;T_h\vh,\wh)=(\Po{\k-1}f,\wh), \quad\forall\wh \in \Vh.
\end{equation}
It is easy to see that there exists $h_M>0$, such that for
$h<h_M$,
$T_h v_h$ is well defined, see for example \cite{Confnonconf}.
For $\vh\in \mathcal B_M$ and $\wh=T_h\vh$, in view of the stability assumption \eqref{stability} and \eqref{elliptic-iteration}, we have
\begin{equation}\begin{split}
c_\star{\|\nabla T_h\vh\|}^2\le 
\diffStabLower a(\vh;T_h\vh,\wh)\le a_h(\vh;T_h\vh,\wh)
=(\Po{\k-1}f,\wh)\le \|f\|\,\|\wh\|.
\end{split}\end{equation}
 Thus, choosing $M$ sufficiently large, so that $\|f\|\le Mc_\star$, we get
\begin{equation}\begin{split}
\label{elliptic-projection-error-1-v2} {\|\nabla
T_hv_h\|}\le c_*^{-1} \|f\| \le M.
\end{split}\end{equation}
Therefore, the operator $T_h$ maps the ball $v_h\in \mathcal B_M$ into itself.
By the Brouwer fixed point theorem, we know that
$T_h$ has a fixed point, which implies that \eqref{vem}
has a solution $\uh \in {\mathcal B}_M$.
\end{proof}

\subsection{Error bounds} In our a priori error analysis, we follow a similar-in-spirit approach to the classical work of Douglas and Dupont~\cite{DD75} where  standard conforming finite element methods were analysed in the same context.

We start with the following preliminary $H^1$--norm error bound.
\begin{theorem} \label{thm-H1-bound}
Let $u\in H^1_0(\Omega)$ be the solution of \eqref{eq:pde} and suppose that $u\in H^{s}(\Omega)\cap W^1_\infty (\Omega)$, $s\ge 2$, assuming that   $f\in H^{s-2}(\Omega)$ and $\diff(u)\in W^{s-1}_{\infty}(\Omega)$. 
Then, for $\uh\in\Vh$ solution of \eqref{vem} the following bound holds 
\begin{equation}\label{eq:thm-H1-bound}
\|\nabla(u-\uh)\|\le C(h^{ r-1}+\|u-\uh\|),
\end{equation}
with $r=\min\{s,k+1\}$ and $C$ a positive constant independent of $h$. 
\end{theorem}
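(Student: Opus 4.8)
The plan is to establish \eqref{eq:thm-H1-bound} by the standard device of splitting the error through the virtual element interpolant and the solution of a suitable ``projected'' linear problem, then estimating each piece using coercivity (stability) together with the consistency relation \eqref{eq:consistency} and the approximation results of \sref{subsec:approximation-properties}. Concretely, write $u-\uh = (u - u_I) + (u_I - \uh) =: \eta + \xi$, where $u_I\in\Vh$ is the interpolant of \Cref{thm:spaceApproximation}. Since $\eta$ is controlled directly by \Cref{thm:spaceApproximation} at order $h^{r-1}$ in the $H^1$-seminorm (and $\Vh\subset H^1_0(\Omega)$ gives a Poincar\'e-type bound), the whole work is bounding $\|\nabla\xi\|$. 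Testing \eqref{vem} with $\vh=\xi$ and using the lower stability bound, $c_\star\|\nabla\xi\|^2 \le \diffStabLower a(\uh;\xi,\xi) \le a_h(\uh;\xi,\xi) = a_h(\uh;u_I,\xi) - (\Po{k-1}f,\xi)$.

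Next I would insert the continuous problem. On each element, add and subtract a polynomial $u_\pi\in\PE{k}$ from \Cref{thm:polynomialApproximation}, exploiting polynomial consistency \eqref{eq:consistency} to replace $a_h^\E(\uh;u_\pi,\xi)$ by $\int_\E \diff(\Po{}\uh)\nabla u_\pi\cdot(\vPo{}\nabla\xi)\dx$. Then compare against the PDE by testing \eqref{eq:origVariationalForm} with $\xi$: the difference produces a sum of terms of the types (i) $\big((\diff(\Po{}\uh) - \diff(u))\nabla u_\pi,\vPo{}\nabla\xi\big)$, i.e. the \emph{nonlinear inconsistency}; (ii) polynomial/projection approximation terms $\big(\diff(\Po{}\uh)\nabla(u-u_\pi),\vPo{}\nabla\xi\big)$ and $\big(\diff(u)\nabla u,(I-\vPo{})\nabla\xi\big)$, both order $h^{r-1}$ by \Cref{thm:polynomialApproximation} and boundedness of $\diff$; (iii) the stabilization term $a_h^\E(\uh;u_I-u_\pi,\xi)$ minus its consistent polynomial counterpart, controlled by the upper stability bound \eqref{stability} and $\|\nabla(u_I-u_\pi)\|\lesssim h^{r-1}|u|_{r}$; and (iv) the data term $(f-\Po{k-1}f,\xi)$, which is $O(h^{r-1})\|f\|_{s-2}$ by \Cref{lemma-vep}. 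All of (ii)--(iv), after a Cauchy--Schwarz and a Young's inequality absorbing a small multiple of $\|\nabla\xi\|^2$ into the left side, contribute $C h^{r-1}$.

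The crux is term (i). Using Lipschitz continuity \eqref{eq:diffLip}, $|\diff(\Po{}\uh) - \diff(u)| \le L|\Po{}\uh - u| \le L(|\Po{}\uh - \Po{}u| + |\Po{}u - u|) \le L(|\Po{}(\uh - u)| + |(I-\Po{})u|)$ pointwise. The piece with $(I-\Po{})u$ is $O(h^{r-1})$ (indeed better, $O(h^r)$) times $|\nabla u_\pi|$, which is bounded since $u\in W^1_\infty$; combined with $\|\vPo{}\nabla\xi\|\le\|\nabla\xi\|$ this is harmless. The dangerous piece is $\big(L|\Po{}(u-\uh)|\,|\nabla u_\pi|\,,\,|\vPo{}\nabla\xi|\big)$: here I bound $|\nabla u_\pi|$ in $L^\infty$ (again via $u\in W^1_\infty$ and \Cref{thm:polynomialApproximation}, or by an inverse estimate plus $W^{1,6}$-stability), so that this term is $\lesssim \|u-\uh\|\,\|\nabla\xi\|$. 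Recalling $u-\uh = \eta+\xi$ and that $\|\eta\|\lesssim h^r|u|_r$, this contributes $C(h^r + \|\xi\|)\|\nabla\xi\|$, and after Young's inequality yields exactly the $\|u-\uh\|$ term on the right of \eqref{eq:thm-H1-bound} (since $\|\xi\| \le \|u-\uh\| + \|\eta\|$). Collecting everything, dividing through, and using the triangle inequality $\|\nabla(u-\uh)\|\le\|\nabla\eta\|+\|\nabla\xi\|$ gives \eqref{eq:thm-H1-bound} with $r=\min\{s,k+1\}$. The main obstacle, as just indicated, is handling the nonlinear inconsistency without gaining control of $\nabla u$ in a strong norm uniformly: this is precisely why the hypothesis $u\in W^1_\infty(\Omega)$ is imposed, and why the bound is only \emph{preliminary}, carrying the not-yet-estimated $\|u-\uh\|$ term that a subsequent duality argument will absorb.
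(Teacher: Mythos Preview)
Your proposal is correct and follows essentially the same approach as the paper: split via the interpolant $u_I$, use stability to reduce to bounding $a_h(\uh;u_I-\uh,\xi)$, insert $u_\pi$ and the continuous equation, and estimate the resulting five types of terms (data oscillation, nonlinear inconsistency, polynomial approximation, projection error, and stabilisation) exactly as the paper does in its terms $I_1$--$I_5$. The only cosmetic differences are that you group the nonlinear inconsistency with $\nabla u_\pi$ rather than $\nabla u$ (the paper's $I_2$ uses $\nabla u$ and then a similar nonlinear piece reappears inside $I_4$), and your term~(ii) handles the $(I-\vPo{})$ contribution in one pass rather than the paper's further splitting of $I_4$; note, though, that the bound on $(\diff(u)\nabla u,(I-\vPo{})\nabla\xi)$ requires the hypothesis $\diff(u)\in W^{r-1}_\infty$ (so that $\diff(u)\nabla u\in H^{r-1}$), not merely boundedness of $\diff$ as you wrote.
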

\begin{proof} From Theorem \ref{thm:spaceApproximation}, there exists a function 
$\u_I\in\Vh$, such that $u-\u_I$ is bounded as desired.   Thus, to show \eqref{eq:thm-H1-bound} 
it  suffices to bound $\|\nabla(\uh-\u_I)\|$.
Let $\psi=\uh-\u_I$, then using the stability Assumption \ref{ass:bilinearForms} 
with $c_*=\elipLower\diffStabLower$, we have
\begin{align}
c_*\|\nabla(\uh-\u_I)\|^2&\le\ah(\uh;\uh-\u_I,\psi)\nonumber\\
&=\vep(f,\psi)+a(\u;\u,\psi)-\ah(\uh;\u_I,\psi)\nonumber\\
&=\vep(f,\psi)+((\diff(u)-\diff(\Po{}\uh))\nabla u,\nabla\psi)
 +\sum_{E\in\Th}\aE(\Po{}\uh; \u-\u_\pi,\psi)\nonumber\\
&\ +\left\{\sum_{E\in\Th}
\aE(\Po{}\uh;\u_\pi,\psi)
-\ahE(\uh;\u_\pi,\psi)\right\}
+\sum_{E\in\Th}\ahE(\uh;\u_\pi-\u_I,\psi)\nonumber\\
&=I_1+I_2+I_3+I_4+I_{5},\label{I-split}
\end{align}
where $\u_\pi$ is, on every element $\E\in\Th$,  the polynomial approximation of $u$ given by Theorem \ref{thm:polynomialApproximation}.
Next, we will bound the various terms $I_i$, $i=1,\dots,5$. We start with $I_1$. Using Lemma \ref{lemma-vep}, and the fact that $r\le s$, we have
\begin{equation}\label{I1-bound}
|I_1|\le C h^{r-1}\|f\|_{r-2}\|\nabla\psi\|.
\end{equation}
To bound $I_2$, in view of \eqref{eq:diffLip}, we get 
\begin{equation}\label{I2-bound}
|I_2|\le L\|\nabla u\|_{L_\infty}\|u-\Po{}\uh\|\,\|\nabla\psi\|.
\end{equation}
Also, using the fact that $\diff$ is bounded along with Theorem \ref{thm:polynomialApproximation}, we obtain
\begin{equation}\label{I3-bound}
|I_3|\le C\sum_E\|\nabla(\u-\u_\pi)\|_E\|\nabla \psi\|_E\le Ch^{ r-1}\|\u\|_{r}\|\nabla\psi\|.
\end{equation}
Using the fact that $\nabla \u_\pi\in \vPE{\k-1}$ and Assumption \ref{ass:bilinearForms}, we have
\begin{equation*}\begin{split}
I_4&=\sum_{E\in\Th}\int_{E}\diff(\Po{}\uh)\nabla \u_\pi \cdot(\bm I-\vPo{})\nabla\psi\\
&=\sum_{E\in\Th}\int_{E}\diff(\Po{}\uh)\nabla (\u_\pi-\u) \cdot(\bm I-\vPo{})\nabla\psi+\int_{E}\diff(\Po{}\uh)\nabla\u\cdot (\bm I-\vPo{})\nabla\psi\\
&=\sum_{E\in\Th}\int_{E}(\diff(\Po{}\uh)-\diff(u))\nabla (\u_\pi-\u)\cdot (\bm I-\vPo{})\nabla\psi+\int_{E}\diff(u)\nabla (\u_\pi-\u)\cdot(\bm I-\vPo{})\nabla\psi\\
&\quad+\sum_{E\in\Th}\int_{E}(\diff(\Po{}\uh)-\diff(u))\nabla\u\cdot (\bm I-\vPo{})\nabla\psi+\int_{E}(\bm I-\vPo{})(\diff(u)\nabla\u)\cdot\nabla\psi;
\end{split}\end{equation*}
thus, in view of the stability of $\vPo{}$, the fact that $\diff$ is Lipschitz continuous, $u\in W^{1}_{\infty}(\Omega)$, Theorem  \ref{thm:polynomialApproximation} and   the hypothesis $\diff(u)\in W^{r-1}_{\infty}(\Omega)$, we deduce
\begin{equation}\label{I4-bound}
\begin{split}
|I_4|&\le C\sum_{E\in\Th}(\|\nabla(\u-\u_\pi)\|_E+\|\Po{}\uh-\u\|_E)\|\nabla \psi\|_E
+\|(\bm I-\vPo{})(\diff(\u)\nabla \u)\|_{E}\|\nabla\psi\|_{E}\\
&\le C(h^{ r-1}\|\u\|_{ r}+\|\Po{}\uh-\u\|)\|\nabla\psi\|.
\end{split}\end{equation}

Finally, we easily get
\begin{equation}\label{I5-bound}
|I_5|\le C(\|\u-\u_\pi\|+\|\u-\u_I\|)\|\nabla\psi\|\le Ch^{ r}\|\u\|_{ r}\|\nabla\psi\|.
\end{equation}

Therefore,  combining the above estimates \eqref{I1-bound}--\eqref{I5-bound} with \eqref{I-split} we obtain
\begin{equation*}
c_\star\|\nabla(\uh-\u_I)\|\le C(h^{ r-1}+\|u-\Po{}\uh\|).
\end{equation*}

Then, in view of Theorem \ref{thm:polynomialApproximation} and the stability of $\Po{}$ in $L^2$--norm, we obtain the estimate
\begin{equation*}
\|\nabla(\uh-\u_I)\|\le C(h^{ r-1}+\|u-\uh\|).
\end{equation*}
\end{proof}

Next, we shall demonstrate the following preliminary $L^2$--norm, error bound.
\begin{theorem} \label{thm-L2-bound}
Let $u\in H^1_0(\Omega)$ be the solution of \eqref{eq:pde} and suppose that $u\in H^{s}(\Omega)\cap W^1_\infty (\Omega)$, $s\ge2$, assuming that $f\in H^{s-1}(\Omega)$ and $\diff(u)\in W^{s-1}_{\infty}(\Omega)$. Then,  for $h$ small enough and  $\uh\in\Vh$ solution of \eqref{vem} the following bound holds 
\begin{equation}\label{eq:thm-L2-bound}
\|u-\uh\|\le C(h^{r}+\|u-\uh\|^{3}),
\end{equation}
where $r=\min\{s,k+1\}$ and $C$ is a positive constant independent of $h$. 
\end{theorem}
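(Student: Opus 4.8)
The plan is a duality (Aubin--Nitsche) argument against the \emph{linearised adjoint} problem, in the spirit of Douglas and Dupont~\cite{DD75}. Write $e:=\u-\uh$ and let $\varphi\in H^1_0(\D)$ solve
\begin{equation*}
(\diff(\u)\nabla w,\nabla\varphi)+\bigl(w,\,\diff'(\u)\,\nabla\u\cdot\nabla\varphi\bigr)=(w,e)\qquad\forall\,w\in H^1_0(\D).
\end{equation*}
Because $\D$ is convex and, by the hypotheses $\u\in W^1_\infty(\D)$ and $\diff(\u)\in W^{s-1}_\infty(\D)$, the coefficients $\diff(\u)$ and $\diff'(\u)\nabla\u$ are bounded with $\diff(\u)$ Lipschitz, this problem is well posed with $\varphi\in H^2(\D)$ and $\|\varphi\|_2\le C\|e\|$. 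Taking $w=e$ yields the identity
\begin{equation*}
\|e\|^2=(\diff(\u)\nabla e,\nabla\varphi)+\bigl(\diff'(\u)\,e\,\nabla\u,\nabla\varphi\bigr).
\end{equation*}

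To treat the first term, let $\varphi_I\in\Vh$ be the interpolant of $\varphi$ from Theorem~\ref{thm:spaceApproximation}, so $\|\varphi-\varphi_I\|+h\|\nabla(\varphi-\varphi_I)\|\le Ch^2\|\varphi\|_2\le Ch^2\|e\|$. Using $(\diff(\u)\nabla\u,\nabla\varphi_I)=(f,\varphi_I)$, the definition~\eqref{vep-def} of $\vep$, and the discrete equation~\eqref{vem} tested with $\varphi_I$, one writes
\begin{equation*}
(\diff(\u)\nabla e,\nabla\varphi)=\bigl(\diff(\u)\nabla e,\nabla(\varphi-\varphi_I)\bigr)-\vep(f,\varphi_I)+\Bigl(a_h(\uh;\uh,\varphi_I)-(\diff(\u)\nabla\uh,\nabla\varphi_I)\Bigr).
\end{equation*}
By the preliminary $H^1$-bound of Theorem~\ref{thm-H1-bound} the first term is $\le C(h^{r-1}+\|e\|)\,h\|e\|\le Ch^r\|e\|+Ch\|e\|^2$, and by Lemma~\ref{lemma-vep} with $j=1$ the second is $O(h^r\|e\|)$ (here the hypothesis $f\in H^{s-1}$ enters).

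The core is the inconsistency term $a_h(\uh;\uh,\varphi_I)-(\diff(\u)\nabla\uh,\nabla\varphi_I)$, which I would decompose along the lines of the proof of Theorem~\ref{thm-H1-bound}. Inserting $\pm(\diff(\Po{}\uh)\nabla\uh,\nabla\varphi_I)$ splits it into (I) the pure-VEM inconsistency $a_h(\uh;\uh,\varphi_I)-\sum_{\E\in\Th}\aE(\Po{}\uh;\uh,\varphi_I)$ and (II) the coefficient inconsistency $-\bigl((\diff(\u)-\diff(\Po{}\uh))\nabla\uh,\nabla\varphi_I\bigr)$; in (I) one inserts further the elementwise polynomial approximants $\u_\pi,\varphi_\pi$ of $\u,\varphi$ (Theorem~\ref{thm:polynomialApproximation}) and invokes the polynomial consistency~\eqref{eq:consistency} and stability~\eqref{stability} of $a_h$. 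The key improvement over the $H^1$-analysis is that, since $\varphi\in H^2(\D)$, each factor carrying $\varphi_I$ can be made to appear as $\varphi-\varphi_I$, as $(\bm I-\vPo{})\nabla\varphi_I$, or as $\nabla(\varphi-\varphi_\pi)$ elementwise, all of which are $O(h)$ rather than $O(\|\varphi\|_1)=O(\|e\|)$, while factors carrying $\nabla\uh$ are rewritten as $(\bm I-\vPo{})\nabla\uh$ (controlled by $C\|\nabla e\|+Ch^{r-1}$) or through $\nabla\uh=\nabla\u-\nabla e$. In (II) one Taylor-expands $\diff(\u)-\diff(\Po{}\uh)=\diff'(\u)(\u-\Po{}\uh)+\tfrac12\diff''(\cdot)(\u-\Po{}\uh)^2$; using $\u-\Po{}\uh=\Po{}e+(\u-\Po{}\u)$ and $\nabla\uh=\nabla\u-\nabla e$, the leading linear part is designed to cancel $(\diff'(\u)\,e\,\nabla\u,\nabla\varphi)$, leaving optimal-order terms and nonlinear remainders — among them $(\diff'(\u)\,e\,\nabla e,\nabla\varphi)$, which I would integrate by parts (its boundary term vanishing since $e|_{\dD}=0$) into $\tfrac12\bigl(e^2,\nabla\cdot(\diff'(\u)\nabla\varphi)\bigr)$, bounded by $C\|e\|_{L^4}^2\|\varphi\|_2$.

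It then remains to estimate the nonlinear leftovers — the quadratic Taylor remainder, the term $\tfrac12(e^2,\nabla\cdot(\diff'(\u)\nabla\varphi))$, and the parts of (I) carrying both a coefficient difference $\diff(\u)-\diff(\Po{}\uh)$ and a $(\bm I-\vPo{})\nabla\uh$ factor. I would handle these by Hölder's inequality with exponents $(3,6,2)$, by the $L^6$-estimate of Theorem~\ref{thm:polynomialApproximation} (and $L^6$-type bounds on the relevant gradients), by the Gagliardo--Nirenberg estimate $\|\u-\Po{}\uh\|_{L^3}\le Ch^r+C\|e\|^{1/2}\|\nabla e\|^{1/2}$ (using $L^2$- and $L^3$-stability of $\Po{}$), and again by Theorem~\ref{thm-H1-bound}; the outcome is contributions of the form $Ch^r\|e\|+Ch^{2r}+\epsilon\|e\|^2+C\|e\|^3$, the cubic term arising (via Young's inequality) from products such as $\|e\|^{5/2}\|\nabla e\|^{1/2}$ together with $\|\nabla e\|\lesssim h^{r-1}+\|e\|$. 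Collecting everything and using $\|e\|\le C$, one obtains for $h$ small enough $\|e\|^2\le C(h^r\|e\|+h^{2r})+\tfrac12\|e\|^2+C\|e\|^3$; absorbing $\tfrac12\|e\|^2$ and distinguishing which of the three remaining right-hand terms dominates then gives~\eqref{eq:thm-L2-bound}. I expect the main obstacle to be precisely this last step: keeping the remainder genuinely cubic in $\|e\|$ (rather than an un-absorbable $C\|e\|^2$), controlling the terms in which $\nabla\uh$ or $\nabla e$ is not paired with a small quantity, and securing the $L^6$-type gradient bounds on general polytopic meshes. A subordinate point is the well-posedness and $H^2$-regularity of the linearised adjoint problem, which I take for granted under the stated hypotheses and the convexity of $\D$.
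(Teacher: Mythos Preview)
Your proposal follows the same Douglas--Dupont duality framework as the paper and is essentially correct; the differences are organisational but worth noting because they affect which estimates are actually available.

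The paper first rewrites
\[
\|e\|^2 = \bigl(a(u;u,\varphi)-a(\uh;\uh,\varphi)\bigr) + \bigl((\bar\kappa_u\, e\,\nabla e,\nabla\varphi) - (\bar\kappa_{uu}\, e^2\,\nabla u,\nabla\varphi)\bigr) =: I+II,
\]
so that \emph{all} nonlinear remainders in $II$ are paired with the smooth $\varphi$ (not $\varphi_I$). Then $\|\nabla\varphi\|_{L^6}\le C\|\varphi\|_2$ is directly available, and $II$ is bounded by H\"older with exponents $(2,3,6)$ and Gagliardo--Nirenberg to give $C\|\nabla e\|^{3/2}\|e\|^{1/2}\|e\|$; your integration-by-parts device is not needed. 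Only the consistency term $I$ is subsequently split via $\varphi_I$, and the resulting VEM inconsistency $a_h(\uh;\uh,\varphi_I)-a(\uh;\uh,\varphi_I)$ is handled in two dedicated lemmas, crucially inserting the \emph{piecewise linear} approximant $\varphi_\pi^1$ (not $\varphi_I$) whenever an $L^6$ gradient bound is required, via the second estimate of Theorem~\ref{thm:polynomialApproximation}.

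In your decomposition, by contrast, the coefficient-inconsistency term and the cross-term $(\kappa'(u)\,e\,\nabla e,\cdot)$ arise paired with $\varphi_I$; to use either your integration by parts (which needs $\nabla\!\cdot(\kappa'(u)\nabla\varphi)\in L^2$) or an $L^6$ bound on the gradient of the test function, you must first swap $\varphi_I$ back for $\varphi$ or for $\varphi_\pi^1$, absorbing an extra $O(h)$ factor. This is exactly what the paper does explicitly. Once that swap is made, your ``designed cancellation'' reproduces the paper's term $II$ and the two routes converge to the same final inequality. The point you flag as the main obstacle---keeping the remainder genuinely cubic---is resolved in the paper precisely by this clean $I$/$II$ split together with the $\varphi_\pi^1$ trick.
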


\begin{proof} 
We use a duality argument. Consider the (linear) auxiliary problem: find $\phi\in H^1_0(\Omega)$ such that
$$
-\text{div}(\diff(u)\nabla \phi)+\diff_u(u)\nabla u\cdot\nabla\phi=u-\uh.
$$
Noting that this equates to $\diff(u)\Delta\phi=u-\uh$ and given $\Omega$ is convex, we have  $\phi\in H^2(\Omega)$ and
\begin{equation}\label{auxiliary-bound}
\|\phi\|_2\le C\|u-\uh\|.
\end{equation} 
In variational form, the above problem reads
\begin{equation}\label{aux-var}
(\diff(u)\nabla \phi,\nabla v)+(\diff_u(u)\nabla u\cdot\nabla\phi,v)=(u-\uh,v),\quad\forall v\in H^1_0(\Omega),
\end{equation}
Then choosing $v=u-\uh$ in \eqref{aux-var}
\begin{align}
\|u-\uh\|^2&=(\diff(u)\nabla\phi,\nabla(u-\uh))+(\diff_u(u)(u-\uh)\nabla u,\nabla \phi)\nonumber\\
&=(\diff(u)\nabla u,\nabla\phi)-(\diff(\uh)\nabla \uh,\nabla\phi)-((\diff(u)-\diff(\uh))\nabla \uh,\nabla\phi)\nonumber\\
&+(\diff_u(u)(u-\uh)\nabla u,\nabla \phi)\nonumber\\
&=(\diff(u)\nabla u,\nabla\phi)-(\diff(\uh)\nabla \uh,\nabla\phi)+((\diff(u)-\diff(\uh))\nabla (u-\uh),\nabla\phi)\nonumber\\
&-((\diff(u)-\diff(\uh))\nabla u-\diff_u(u)(u-\uh)\nabla u,\nabla\phi)\nonumber\\
&=\Big(a(u;u,\phi)-a(\uh;\uh,\phi)\Big)\nonumber\\
&+\Big(((\bar \diff_u(u-\uh)\nabla (u-\uh),\nabla\phi)-((\bar \diff_{uu}(u-\uh)^2\nabla u,\nabla\phi)\Big)=:I+II,\label{error-split}
\end{align}
with $\bar \diff_u, \bar \diff_{uu}$ such that
\begin{align}
\diff(u)-\diff(\uh)&=(u-\uh)\int_0^1\diff_u(u-t(u-\uh))\,dt=\bar \diff_u(u-\uh)\label{ku}\\
\diff(u)-\diff(\uh)-\diff_u(u)(u-\uh)&=(u-\uh)^2\int_0^1\diff_{uu}(u-t(u-\uh))\,dt\notag\\
&=\bar \diff_{uu}(u-\uh)^2\label{kuu}.
\end{align}
In the sequel we will show Lemma \ref{thm:consistency:term I}, which in view of \eqref{auxiliary-bound}, gives
\begin{equation}
\label{eq:consistency:term I}
|I| \le  C 
( h\|\nabla(\u-\uh)\|
 +\|\u-\uh\|^{1/2}\|\nabla(\u-\uh)\|^{3/2}
+h^{r}\|u\|_{r}+h^{r}\|f\|_{r-1})\|\uh-u\|.
\end{equation}
For $II$ in~\eqref{error-split}, using the H\"older inequality 
\begin{equation}\label{Holder-ineq}
\|vw\|\le \|v\|_{L_3}\|w\|_{L_6},
\end{equation}
and the fact that $\bar \diff_u, \bar\diff_{uu}$ are bounded uniformly on $\mathbb{R}$, we  get
\begin{align*}
|II|&\le C\|\nabla(u-\uh)\|\,\|(u-\uh)\nabla\phi\|+C\|(u-\uh)\nabla u\|\,\|(u-\uh)\nabla \phi\|\\
&\le C\|\nabla (u-\uh)\|\,\|u-\uh\|_{L_3}\|\nabla\phi\|_{L_6}
+C\|u-\uh\|_{L_3}^2\|\nabla u\|_{L_6}\,\|\nabla\phi\|_{L_6}.
\end{align*}
Next, in view of the Gagliardo--Nirenberg--Sobolev inequality,
\begin{equation}\label{Sobolev-interpolation}
\|v\|_{L_3}\le C\|v\|^{1/2}\,\|\nabla v\|^{1/2},
\end{equation}
the Sobolev Imbedding Theorem and the elliptic regularity \eqref{auxiliary-bound}, we have
\begin{equation}\label{term-II-est}
\begin{split}
| II|&\le C\|\nabla(u-\uh)\|^{3/2}\|u-\uh\|^{1/2}\|u-u_h\|+C\|\nabla(u-\uh)\|\,\|u-\uh\|\|u-\uh\|\\
&\le C\|\nabla(u-\uh)\|^{3/2}\|u-\uh\|^{1/2}\|u-u_h\|.
\end{split}
\end{equation}

Combining the previous estimates for terms $I$ and $II$, we get the 
 desired bound for $h$ sufficiently small.
\end{proof}

To complete the proof of Theorem~\ref{thm-L2-bound}, it remains to show that  the consistency error 
bound~\eqref{eq:consistency:term I} holds true. We do so through the 
following lemmas.

\begin{lemma} \label{thm:consistency:term I}
Under the assumptions of Theorem~\ref{thm-L2-bound} and given $\phi\in H^1_0(\Omega)\cap H^2(\Omega)$, there exists a positive constant $C$ independent of $h$ such that 
\begin{equation*}
|a(u;u,\phi)-a(\uh;\uh,\phi)|
\le
h\|\nabla(\u-\uh)\|
+\|\u-\uh\|^{1/2}\|\nabla(\u-\uh)\|^{3/2}
+h^{r}\|u\|_{r}+h^{r}\|f\|_{r-1}
)\|\phi\|_{2},
\end{equation*}
where $r=\min\{s,k+1\}$.
\end{lemma}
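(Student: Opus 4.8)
The plan is to start from the exact (PDE) identity $a(u;u,\phi) = (f,\phi)$ valid for all $\phi \in H^1_0(\Omega)$, and rewrite $a(u;u,\phi) - a(\uh;\uh,\phi)$ by inserting the virtual element scheme \eqref{vem} tested against a suitable interpolant $\phi_I \in \Vh$ of $\phi$ given by Theorem~\ref{thm:spaceApproximation}. Specifically I would write
\begin{align*}
a(u;u,\phi) - a(\uh;\uh,\phi)
&= (f,\phi) - a(\uh;\uh,\phi-\phi_I) - a(\uh;\uh,\phi_I)\\
&= (f,\phi-\phi_I) + \big((f,\phi_I) - a_h(\uh;\uh,\phi_I)\big) + \big(a_h(\uh;\uh,\phi_I) - a(\uh;\uh,\phi_I)\big)\\
&\quad - \big(a(\uh;\uh,\phi-\phi_I)\big),
\end{align*}
where the second bracket is $-\vep(f,\phi_I)$ by \eqref{vem}. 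The term $(f,\phi-\phi_I) - a(\uh;\uh,\phi-\phi_I)$ can be compared with $(f,\phi-\phi_I) - a(u;u,\phi-\phi_I) = 0$ (from the PDE), leaving a difference $a(u;u,\phi-\phi_I) - a(\uh;\uh,\phi-\phi_I)$ of the same type as the left-hand side but now paired with $\phi - \phi_I$; this is controlled using the Lipschitz bound \eqref{eq:diffLip}, boundedness of $\diff$, the $W^1_\infty$ regularity of $u$, the Gagliardo--Nirenberg--Sobolev inequality \eqref{Sobolev-interpolation} together with the interpolation estimate $\|\phi-\phi_I\| + h\|\nabla(\phi-\phi_I)\| \le Ch^2\|\phi\|_2$, giving terms of order $h\|\nabla(u-\uh)\|\,\|\phi\|_2$ and $h\,\|u-\uh\|^{1/2}\|\nabla(u-\uh)\|^{1/2}\|\phi\|_2$, both absorbed in the stated bound. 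The consistency error $\vep(f,\phi_I)$ is bounded by Lemma~\ref{lemma-vep} (with $j=0$) by $Ch^{s}\|f\|_{s-1}\|\phi_I\| \le Ch^r\|f\|_{r-1}\|\phi\|_2$, after controlling $\|\phi_I\|$ by $\|\phi\|_2$.

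The genuinely new work is the nonlinear inconsistency term $a_h(\uh;\uh,\phi_I) - a(\uh;\uh,\phi_I)$, which I would handle elementwise. On each $E$, introduce the local polynomial approximations $u_\pi$ of $u$ (Theorem~\ref{thm:polynomialApproximation}) and subtract and add $\ahE(\uh;u_\pi,\phi_I)$ and $\aE(\Po{}\uh;u_\pi,\phi_I)$, so as to exploit polynomial consistency \eqref{eq:consistency}: $\ahE(\uh;u_\pi,\phi_I) = \intE \diff(\Po{}\uh)\nabla u_\pi\cdot(\vPo{}\nabla\phi_I)$. This decomposes the term into (i) $\ahE(\uh;\uh-u_\pi,\phi_I)$, bounded by stability \eqref{stability}, boundedness of $\diff$, and the triangle inequality $\|\nabla(\uh-u_\pi)\|_E \le \|\nabla(\uh-u)\|_E + \|\nabla(u-u_\pi)\|_E \le \|\nabla(\uh-u)\|_E + Ch^{r-1}|u|_{r,E}$; (ii) $\aE(\Po{}\uh;u_\pi-\uh,\phi_I)$, handled analogously using $\diff\le\elipUpper$; (iii) the polynomial-consistency residuals of the form $\intE\diff(\Po{}\uh)\nabla u_\pi\cdot(\vPo{}-\bm I)\nabla\phi_I$, which by the trick already used for $I_4$ in the proof of Theorem~\ref{thm-H1-bound} (splitting $\diff(\Po{}\uh) = \diff(u) + (\diff(\Po{}\uh)-\diff(u))$, replacing $\nabla u_\pi$ by $\nabla u$ up to $\nabla(u-u_\pi)$, and moving $\vPo{}-\bm I$ onto $\diff(u)\nabla u$) yields $\|(\bm I - \vPo{})(\diff(u)\nabla u)\|_E \le Ch^{r-1}$ contributions plus $(\|\nabla(u-u_\pi)\|_E + \|\Po{}\uh - u\|_E)\|\nabla\phi_I\|_E$ contributions. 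Crucially, the factor $\|\nabla\phi_I\|_E$ or $\|(\bm I-\vPo{})\nabla\phi_I\|_E$ appearing here should be estimated \emph{not} as $\|\nabla\phi_I\|_E \lesssim \|\phi\|_2$ directly, but — to gain the extra power of $h$ that makes the whole term $O(h^r\|\phi\|_2 + \dots)$ — by writing $\phi_I = (\phi_I - \phi) + \phi$ and using $\|(\bm I - \vPo{})\nabla\phi\|_E \le Ch\,|\phi|_{2,E}$, so that the residual part carries a clean factor $h$.

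The main obstacle I anticipate is bookkeeping the powers of $h$ so that every term ends up as one of the four summands in the statement times $\|\phi\|_2$: in particular, ensuring that the polynomial-inconsistency pieces genuinely produce $h^r\|u\|_r\|\phi\|_2$ (not $h^{r-1}$) by correctly exploiting the extra $h$ from $\|(\bm I-\vPo{})\nabla\phi\|_E \le Ch|\phi|_{2,E}$ and from $\|(\bm I - \vPo{})(\diff(u)\nabla u)\|_E \le Ch^{r-1}|\diff(u)\nabla u|_{r-1,E}$ paired against $\|\nabla\phi_I\|_E$, and that the $\diff$-Lipschitz cross terms involving $\Po{}\uh - u$ are kept as $\|\Po{}\uh-u\|\,\|\nabla(u-\uh)\|$-type quantities which, via $\|\Po{}\uh-u\| \le \|u-\uh\| + Ch^r\|u\|_r$ and the Gagliardo--Nirenberg estimate, collapse into the $\|u-\uh\|^{1/2}\|\nabla(u-\uh)\|^{3/2}$ term and lower-order $h^r$ terms. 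A secondary technical point is controlling $\|\phi_I\|$ and $\|\nabla\phi_I\|$ by $\|\phi\|_2$ uniformly, which follows from Theorem~\ref{thm:spaceApproximation} with $s=2$ and $H^2$-regularity of $\phi$. Summing over $E \in \Th$ with Cauchy--Schwarz and collecting terms then yields the claimed estimate.
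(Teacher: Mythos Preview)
Your opening decomposition into the three building blocks
\[
a(u;u,\phi-\phi_I)-a(\uh;\uh,\phi-\phi_I),\qquad -\vep(f,\phi_I),\qquad a_h(\uh;\uh,\phi_I)-a(\uh;\uh,\phi_I)
\]
is exactly the paper's split $I+II+III$, and your treatment of the first two is fine (for $II$ you should use Lemma~\ref{lemma-vep} with $j=1$, not $j=0$, to get the factor $h^{r}$; with $j=0$ you only get $h^{r-1}$).

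The genuine gap is in your handling of the inconsistency term $III=a_h(\uh;\uh,\phi_I)-a(\uh;\uh,\phi_I)$. Your pieces (i) $a_h^{\E}(\uh;\uh-u_\pi,\phi_I)$ and (ii) $a^{\E}(\Po{}\uh;u_\pi-\uh,\phi_I)$, bounded by stability and boundedness of $\kappa$, give only
\[
C\|\nabla(\uh-u_\pi)\|\,\|\nabla\phi_I\|\ \le\ C\big(\|\nabla(u-\uh)\|+h^{r-1}\|u\|_{r}\big)\|\phi\|_{2},
\]
which is one power of $h$ short of what the lemma asserts. The trick you invoke --- gaining $h$ from $\|(\bm I-\vPo{})\nabla\phi\|_{\E}\le Ch|\phi|_{2,\E}$ --- is only available in your piece (iii), because that is the only place where $\vPo{}$ acts on $\nabla\phi_I$; in (i) and (ii) the full gradient $\nabla\phi_I$ sits there and stability alone cannot manufacture the missing $h$. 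There is also an implicit fourth piece $\int_{\E}(\diff(\Po{}\uh)-\diff(\uh))\nabla\uh\cdot\nabla\phi_I$ in your splitting, and there the Hölder estimate would require $\|\nabla\phi_I\|_{L^6}\lesssim\|\phi\|_2$, which is not available for the VEM interpolant.

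The paper repairs this by introducing, in addition to $u_\pi$, a piecewise \emph{linear} approximation $\phi_\pi^1\in\mathbb{P}_1(\E)$ of $\phi$ and splitting the \emph{test function} first:
\[
III=\big\{a_h-a\big\}(\uh;\uh-u_\pi,\phi_I-\phi_\pi^1)+\big\{a_h-a\big\}(\uh;u_\pi,\phi_I-\phi_\pi^1)+\big\{a_h-a\big\}(\uh;\uh,\phi_\pi^1).
\]
In the first two brackets the missing $h$ comes from $\|\nabla(\phi_I-\phi_\pi^1)\|\le Ch\|\phi\|_{2}$. In the third, polynomial consistency now applies on the $\phi$-slot (since $\phi_\pi^1\in\PE{k}$), giving the explicit expression $\int_{\E}\diff(\Po{}\uh)(\vPo{}\nabla\uh)\cdot\nabla\phi_\pi^1$, which can be compared to $a^{\E}(\uh;\uh,\phi_\pi^1)$ using the $L^6$ bound $\|\nabla\phi_\pi^1\|_{L^6}\le C\|\phi\|_{2}$ from Theorem~\ref{thm:polynomialApproximation}; this is the separate Lemma~\ref{thm:consistency:term B} and it is precisely where the term $\|u-\uh\|^{1/2}\|\nabla(u-\uh)\|^{3/2}$ is generated. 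Without the insertion of $\phi_\pi^1$, your decomposition cannot reach the stated order.
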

\begin{proof}
 Let $\phi_I\in \Vh$ be 
 the approximation of $\phi$ given by Theorem \ref{thm:spaceApproximation}  and using~\eqref{eq:origVariationalForm} and \eqref{vem} we split the difference  $ a(u;u,\phi)-a(\uh;\uh,\phi)$ as
\begin{align*}
 a(u;u,\phi)-a(\uh;\uh,\phi)=&\{a(u;u,\phi-\phi_I)-a(\uh;\uh,\phi-\phi_I)\}+(f-\Po{\k-1}f,\phi_I)\\
&+\{\ah(\uh;\uh,\phi_I)-a(\uh;\uh,\phi_I)\}
=I+II+III.
\end{align*}
Then, in view of~\eqref{ku}, we rewrite term $I$ as
\begin{align*}
I&=(\diff(\uh)\nabla (u-\uh)+(\diff(u)-\diff(\uh))\nabla u,\nabla(\phi-\phi_I))\\
&=(\diff(\uh)\nabla (u-\uh)+\bar \diff_u(u-\uh)\nabla u,\nabla(\phi-\phi_I)).
\end{align*}
Employing  Theorem \ref{thm:spaceApproximation} and \eqref{auxiliary-bound}, we obtain
\begin{align*}
|I|\le Ch(\|\nabla(u-\uh)\|+\|u-\uh\|\,\|\nabla u\|_{L_\infty})\|\phi\|_2\le Ch\|\nabla(u-\uh)\|\,\|\phi\|_2.
\end{align*}
As for term $II$, using Lemma \ref{lemma-vep} we get
\begin{equation}\label{eq:fterm}
|II|\le Ch^{r}\|f\|_{r-1}\|\nabla\phi_I\|\le Ch^{r}\|f\|_{r-1}\|\phi\|_2.
\end{equation}
%
%
In view of bounding term $III$, we write
\begin{align}
\nonumber
III=&
\{a_h(\uh;\uh-\u_\pi,\phi_I-\phi_\pi^1)-a(\uh;\uh-\u_\pi,\phi_I-\phi_\pi^1)\}\\
\nonumber
&+
\{a_h(\uh;\u_\pi,\phi_I-\phi_\pi^1)-a(\uh;\u_\pi,\phi_I-\phi_\pi^1)\} +
\{a_h(\uh;\uh,\phi_\pi^1)-a(\uh;\uh,\phi_\pi^1)\}\\
\label{I_32-split}
=&III_1+III_2+III_3,
\end{align}
with $\phi_\pi^1|_\E\in \PE{1}$ and $\u_\pi|_\E\in\PE{k}$, for any $\E\in\Th$  given by Theorem \ref{thm:polynomialApproximation}.
Using Theorems \ref{thm:polynomialApproximation} and \ref{thm:spaceApproximation} we bound the term $III_1$ in \eqref{I_32-split} as 
\begin{align*}
|III_1|
&\le Ch\|\nabla(\uh-\u_\pi)\|\|\phi\|_{2}
\le Ch (\|\nabla(u-\uh)\|+h^{r-1}\|u\|_{r})\|\phi\|_{2}.
\end{align*}

Next, to estimate $III_2$, we split this term as a summation over each $E\in\Th$ and use the polynomial consistency~\eqref{eq:consistency} and the definition of $\bar \diff_u$, given by~\eqref{ku}, to get
\begin{align*}
a_h^E(&\uh ;\u_\pi,\phi_I-\phi_\pi^1)-a^E(\uh;\u_\pi,\phi_I-\phi_\pi^1)\\
&=\int_E(\diff(\Po{}\uh)\nabla \u_\pi\cdot\vPo{}\nabla(\phi_I-\phi_\pi^1)-\diff(\uh)\nabla \u_\pi\cdot\nabla(\phi_I-\phi_\pi^1)\dx\\
&=\int_E(\diff(\Po{}\uh)\nabla \u_\pi\cdot(\vPo{}-\bm I)\nabla(\phi_I-\phi_\pi^1)
+(\diff(\Po{}\uh)-\diff(\uh))\nabla \u_\pi\cdot\nabla(\phi_I-\phi_\pi^1))\dx\\
&=III_2^1+III_2^2.
\end{align*}
Then, following the steps for the estimation of $I_4$ in \eqref{I4-bound}, using Theorems \ref{thm:polynomialApproximation} and \ref{thm:spaceApproximation}  along with \eqref{auxiliary-bound}, we can see that 
\begin{equation}\label{eq:III21}
| III_2^1 |\le Ch(h^{r-1}\|u\|_{r,E}+\|\Po{}\uh-\u\|_E) \|\phi\|_{2,E}.
\end{equation}

To bound $III_2^2$, we first note, in view of \eqref{Holder-ineq}, that
\begin{equation}\label{eq:B2}
| III_2^2 |
\le C \|\Po{}\uh-\uh\|_{L_3(E)}\|\nabla \u_\pi\|_{L_6(E)}\|\nabla(\phi_I- \phi_\pi^1)\|_{E}.
\end{equation}
Further, using the stability property of $P_h$, namely $\|P_h\phi_I\|_{L_3(E)}\le \tilde C\|\phi_I\|_{L_3(E)}$, and the Gagliardo--Nirenberg--Sobolev inequality \eqref{Sobolev-interpolation}, we obtain
\begin{equation}
\|\Po{}\uh-\uh\|_{L_3(E)}\le C\| \u_\pi-\uh\|_E^{1/2}\|\nabla( \u_\pi-\uh)\|_E^{1/2},
\end{equation}
with $C,\tilde{C}>0$ independent of $\E$.
Using this in~\eqref{eq:B2} and summing this new bound of \eqref{eq:B2}  and~\eqref{eq:III21} over all $\E\in\Th$ and  using Theorems \ref{thm:polynomialApproximation} and \ref{thm:spaceApproximation}, it follows that
\begin{equation*}
| III_2 |\le Ch(\|\nabla( u-\uh)\|+\|\Po{}\uh-\u\|+h^{r-1}\|u\|_{r})\|\phi\|_{2}.
\end{equation*}
 
Finally, as a consequence of Lemma~\ref{thm:consistency:term B} below, we have
\begin{equation*}
|III_3|\le C(
\|\u-\uh\|^{1/2}\|\nabla(\u-\uh)\|^{3/2}
+h^{r}\|u\|_{r}
)\|\phi\|_{2}.
\end{equation*}
Combining this with~\eqref{eq:fterm}, the bounds for $III_1$, and $III_2$, the 
 desired bound follows.
\end{proof}

\begin{lemma}\label{thm:consistency:term B}
Let the assumptions of Theorem~\ref{thm-L2-bound} hold true and $\phi\in H^2\cap H^1_0$.
Then,  there exists a constant $C$ independent of $h$ such that,
 \begin{equation*}
|a_h(\uh;\uh,
\phi_\pi^1)-a(\uh;\uh,\phi_\pi^1)|\le C(
\|\nabla(\u-\uh)\|
+\|\u-\uh\|^{1/2}\|\nabla(\u-\uh)\|^{3/2}
+h^{r}\|u\|_{r}
)\|\phi\|_{2},
\end{equation*}
where $\phi_\pi^1\in \mathbb{P}_1(E)$  for all $\E\in\Th$, is given by Theorem \ref{thm:polynomialApproximation},
  and  $r=\min\{s,k+1\}$.
\end{lemma}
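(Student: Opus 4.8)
The plan is to estimate the nonlinear inconsistency elementwise, exploiting that, since $\phi_\pi^1|_\E\in\PE{1}$, the gradient $\nabla\phi_\pi^1$ is \emph{constant} on each $\E$ — so $\vPo{}\nabla\phi_\pi^1=\nabla\phi_\pi^1$ and $\nabla\phi_\pi^1$ is $L^2(\E)$-orthogonal to $(\bm I-\vPo{})w$ for any $w$ — together with the polynomial consistency~\eqref{eq:consistency}. On each $\E$ I would write $\uh=(\uh-\u_\pi)+\u_\pi$, where $\u_\pi\in\PE{k}$ is the polynomial approximant of $\u$ from Theorem~\ref{thm:polynomialApproximation}, so that, using the symmetry of $\ahE$ and $\aE$ in their last two arguments,
\[
\ahE(\uh;\uh,\phi_\pi^1)-\aE(\uh;\uh,\phi_\pi^1)
=\underbrace{\ahE(\uh;\phi_\pi^1,\uh-\u_\pi)-\aE(\uh;\phi_\pi^1,\uh-\u_\pi)}_{=:B_1^\E}
+\underbrace{\ahE(\uh;\u_\pi,\phi_\pi^1)-\aE(\uh;\u_\pi,\phi_\pi^1)}_{=:B_2^\E}.
\]
Applying~\eqref{eq:consistency} (with the polynomial $\phi_\pi^1$ for $B_1^\E$ and $\u_\pi$ for $B_2^\E$), and using $\vPo{}\nabla\phi_\pi^1=\nabla\phi_\pi^1$ and $(\bm I-\vPo{})\nabla(\uh-\u_\pi)=(\bm I-\vPo{})\nabla\uh$ (because $\nabla\u_\pi\in\vPE{k-1}$), one finds $B_2^\E=\intE\big(\diff(\Po{}\uh)-\diff(\uh)\big)\nabla\u_\pi\cdot\nabla\phi_\pi^1\dx$ and, after adding and subtracting $\diff(\Po{}\uh)\nabla(\uh-\u_\pi)\cdot\nabla\phi_\pi^1$,
\[
B_1^\E=\underbrace{\intE\diff(\Po{}\uh)(\vPo{}-\bm I)\nabla\uh\cdot\nabla\phi_\pi^1\dx}_{=:B_{1,a}^\E}
+\underbrace{\intE\big(\diff(\Po{}\uh)-\diff(\uh)\big)\nabla(\uh-\u_\pi)\cdot\nabla\phi_\pi^1\dx}_{=:B_{1,b}^\E}.
\]

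The consistency error sits in $B_{1,a}$. Since $\nabla\phi_\pi^1$ is constant and $(\vPo{}-\bm I)\nabla\uh$ is orthogonal to it, I can replace $\diff(\Po{}\uh)$ by $\diff(\Po{}\uh)-\bar d_\E$, with $\bar d_\E$ its mean over $\E$, for free; then the Poincar\'e inequality, the boundedness of $\diff_u$, and the $H^1(\E)$-stability of $\Po{}$ give $\|\diff(\Po{}\uh)-\bar d_\E\|_\E\le Ch_\E\|\nabla\uh\|_\E$, whereas $\|(\bm I-\vPo{})\nabla\uh\|_\E\le\|\nabla(\u-\uh)\|_\E+Ch_\E^{r-1}|\u|_{r,\E}$ (best $\vPE{k-1}$-approximation and Theorem~\ref{thm:polynomialApproximation}, since $\nabla\u_\pi\in\vPE{k-1}$). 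Summing over $\E$, and using $\|\nabla\uh\|\le\|\nabla\u\|+\|\nabla(\u-\uh)\|$ together with $\|\nabla\phi_\pi^1\|\le C\|\phi\|_2$ (Theorem~\ref{thm:polynomialApproximation} and the triangle inequality), this yields $|B_{1,a}|\le C(h\|\nabla(\u-\uh)\|+h^{r}\|\u\|_r)\|\phi\|_2$, up to higher-order terms.

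The nonlinear inconsistency comes from $B_{1,b}$ and $B_2$. Bounding $|\diff(\Po{}\uh)-\diff(\uh)|\le L|\Po{}\uh-\uh|$ by~\eqref{eq:diffLip} and summing over $\E$, a global H\"older inequality — with exponents $3,2,6$ for $B_{1,b}$ (so that $\nabla\phi_\pi^1$ is measured in $L_6$) and $3,6,2$ for $B_2$ — gives
\[
|B_{1,b}|\le L\,\|\Po{}\uh-\uh\|_{L_3}\,\|\nabla(\uh-\u_\pi)\|\,\|\nabla\phi_\pi^1\|_{L_6},\qquad
|B_2|\le L\,\|\Po{}\uh-\uh\|_{L_3}\,\|\nabla\u_\pi\|_{L_6}\,\|\nabla\phi_\pi^1\|.
\]
Here $\|\nabla\phi_\pi^1\|_{L_6}\le C\|\phi\|_2$ and $\|\nabla\u_\pi\|_{L_6}\le C$ (Theorem~\ref{thm:polynomialApproximation}, the Sobolev embedding $H^2\hookrightarrow W^{1,6}$, and $\u\in W^1_\infty(\Omega)$), $\|\nabla(\uh-\u_\pi)\|\le\|\nabla(\u-\uh)\|+Ch^{r-1}\|\u\|_r$, and, exactly as for the analogous term $III_2^2$ in the proof of Lemma~\ref{thm:consistency:term I}, the $L^3$-stability of $\Po{}$, the Gagliardo--Nirenberg--Sobolev inequality~\eqref{Sobolev-interpolation} and a Cauchy--Schwarz over the mesh give $\|\Po{}\uh-\uh\|_{L_3}\le C\|\u_\pi-\uh\|^{1/2}\|\nabla(\u_\pi-\uh)\|^{1/2}$. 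Writing $\u_\pi-\uh=(\u-\uh)-(\u-\u_\pi)$ and using Theorem~\ref{thm:polynomialApproximation}, the explicit gradient factor in $B_{1,b}$ multiplies the $\|\nabla(\u_\pi-\uh)\|^{1/2}$ produced by~\eqref{Sobolev-interpolation}, which is what generates the total power $3/2$; collecting the contributions and absorbing cross terms with the Poincar\'e--Friedrichs inequality $\|\u-\uh\|\le C\|\nabla(\u-\uh)\|$ and $r\ge 2$, one obtains $|B_{1,b}|+|B_2|\le C(\|\nabla(\u-\uh)\|+\|\u-\uh\|^{1/2}\|\nabla(\u-\uh)\|^{3/2}+h^{r}\|\u\|_r)\|\phi\|_2$, which combined with the bound on $B_{1,a}$ is the assertion.

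I expect the main obstacle to be the handling of the mixed terms of the form $\big(\diff(\Po{}\uh)-\diff(\uh)\big)$ times first derivatives of the unknowns: since $\Po{}\uh-\uh$ is only controlled in $L^2$ while it multiplies gradients, the estimate must be routed through the $L_3/L_6$ H\"older splitting and the Gagliardo--Nirenberg--Sobolev inequality~\eqref{Sobolev-interpolation}, and it is precisely this mechanism that generates the superlinear term $\|\u-\uh\|^{1/2}\|\nabla(\u-\uh)\|^{3/2}$. Keeping the H\"older bookkeeping consistent across elements, and — above all — recovering the optimal powers of $h$ in the consistency terms, is where the care lies: this relies on polynomial consistency to transfer a derivative onto the \emph{constant} gradient of $\phi_\pi^1$, and on the mean-value/Poincar\'e argument, which exploits the boundedness of $\diff_u$ rather than merely the Lipschitz continuity of $\diff$; a naive Cauchy--Schwarz estimate is lossy.
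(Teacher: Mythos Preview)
Your decomposition into $B_{1,a}$, $B_{1,b}$, $B_2$ is essentially the paper's (their $I+II$, $III_1$, $III_2$ respectively), and your treatment of $B_{1,b}$ matches the paper's. However, two of your estimates do not deliver the stated bound.

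\medskip
\textbf{The term $B_{1,a}$.} Your mean--subtraction/Poincar\'e argument produces the \emph{$L^2$} bound $\|\diff(\Po{}\uh)-\bar d_\E\|_\E\le Ch_\E\|\nabla\uh\|_\E$, and you pair it with the $L^2$ quantities $\|(\bm I-\vPo{})\nabla\uh\|_\E$ and $\|\nabla\phi_\pi^1\|$. But the integrand is a product of \emph{three} functions, so H\"older requires exponents with $1/p+1/q+1/r=1$; three $L^2$ factors are not admissible, and the claimed inequality $|B_{1,a}|\le C(h\|\nabla(\u-\uh)\|+h^{r}\|\u\|_r)\|\phi\|_2$ does not follow from the listed ingredients. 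Upgrading one factor to $L^\infty$ via inverse estimates on $\Po{}\uh$ loses the gained power of $h$. The paper avoids this by writing $\diff(\Po{}\uh)=(\diff(\Po{}\uh)-\diff(u))+\diff(u)$ instead of subtracting a mean: the first piece is handled by the same $L_3/L_2/L_6$ H\"older you use for $B_{1,b}$, while for the second one writes $\nabla\phi_\pi^1=\nabla(\phi_\pi^1-\phi)+\nabla\phi$ and transfers the projector, $\int_\E\diff(u)(\vPo{}-\bm I)\nabla(\uh-\u_\pi)\cdot\nabla\phi=\int_\E(\vPo{}-\bm I)(\diff(u)\nabla\phi)\cdot\nabla(\uh-\u_\pi)$, so that $\diff(u)\in L^\infty$ can be used and the factor $h$ comes from $\|(\bm I-\vPo{})(\diff(u)\nabla\phi)\|_\E$.

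\medskip
\textbf{The term $B_2$.} Your $L_3/L_6/L_2$ H\"older yields $|B_2|\le C\|\u_\pi-\uh\|^{1/2}\|\nabla(\u_\pi-\uh)\|^{1/2}\|\phi\|_2$. Expanding $\u_\pi-\uh=(\u-\uh)-(\u-\u_\pi)$ and using Theorem~\ref{thm:polynomialApproximation} produces, among others, the pure--$h$ contribution $(h^{r}\|\u\|_r)^{1/2}(h^{r-1}\|\u\|_r)^{1/2}=h^{r-1/2}\|\u\|_r$, which is \emph{not} controlled by $h^{r}\|\u\|_r$; neither Poincar\'e--Friedrichs nor $r\ge 2$ recovers the missing $h^{1/2}$. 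The paper's fix is the orthogonality you did not exploit: since $\nabla\u_\pi\cdot\nabla\phi_\pi^1\in\PE{k}$ and $\Po{}\uh-\uh\perp\PE{k}$, one may subtract any constant $c$ from $\bar\diff_u$ for free, and choosing $c$ suitably supplies the extra factor $h$, giving $|B_2|\le Ch\,\|\uh-\u_\pi\|^{1/2}\|\nabla(\uh-\u_\pi)\|^{1/2}\|\phi\|_2$, which \emph{is} of the required order.
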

\begin{proof}
Using polynomial consistency \eqref{eq:consistency}, the fact that $\vPo{}\nabla \u_\pi=\nabla \u_\pi$, with $\u_\pi\in\PE{k}$ given by Theorem~\ref{thm:polynomialApproximation} and the definition of $\bar \diff_u$ given by~\eqref{ku}, we have for all $E\in\Th$
\begin{align*}
a_h^E&(\uh;\uh, \phi_\pi^1)-a^E(\uh;\uh, \phi_\pi^1)
=\int_E\diff(\Po{}\uh)(\vPo{}\nabla\uh)\cdot\nabla \phi_\pi^1-\diff(\uh)\nabla\uh\cdot\nabla \phi_\pi^1\dx\\
&=\int_E\diff(\Po{}\uh)(\vPo{}-\bm I)\nabla\uh\cdot\nabla \phi_\pi^1
+(\diff(\Po{}\uh)-\diff(\uh))\nabla\uh\cdot\nabla \phi_\pi^1\dx\\
&=\int_E\diff(\Po{}\uh)(\vPo{}-\bm I)\nabla(\uh-\u_\pi)\cdot\nabla \phi_\pi^1\dx
+\int_E\bar \diff_u(\Po{}\uh-\uh)\nabla\uh\cdot\nabla \phi_\pi^1\dx\\
&=\int_E(\diff(\Po{}\uh)-\diff(u))(\vPo{}-\bm I)\nabla(\uh-\u_\pi)\cdot\nabla \phi_\pi^1\dx
+\int_E\diff(u)(\vPo{}-\bm I)\nabla(\uh-\u_\pi)\cdot\nabla \phi_\pi^1\dx\\
&+\int_E\bar \diff_u(\Po{}\uh-\uh)\nabla\uh\cdot\nabla \phi_\pi^1\dx\\
&=\int_E\bar \diff_u(\Po{}\uh-u)(\vPo{}-\bm I)\nabla(\uh-\u_\pi)\cdot\nabla \phi_\pi^1\dx
+\int_E\diff(u)(\vPo{}-\bm I)\nabla(\uh-\u_\pi)\cdot\nabla \phi_\pi^1\dx\\
&+\int_E\bar \diff_u(\Po{}\uh-\uh)\nabla\uh\cdot\nabla \phi_\pi^1\dx
=I_{\E}+II_{\E}+III_{\E}.
\end{align*}
Let $I=\sum_{\E}I_{\E}$, then 
 we easily get
\begin{align*}
| I |\le C\|\Po{}\uh-u\|_{L_3}\|\nabla \phi_\pi^1\|_{L_6}\|\nabla(\uh- \u_\pi)\|.
\end{align*}
Using Theorem \ref{thm:polynomialApproximation}, we have $\|\nabla \phi_\pi^1\|_{L_6}\le C \|\nabla \phi\|_{W^{1,6}}$ and, hence, 
using a Sobolev imbedding, 
\begin{equation}\label{Sobolev-embed}
\|\nabla \phi_\pi^1\|_{L_6}\le C|\phi|_{2}.
\end{equation}
Now, using Theorem \ref{thm:polynomialApproximation} once again, we get
\begin{align*}
| I |\le C(\| \u_\pi-\uh\|^{1/2}\|\nabla( \u_\pi-\uh)\|^{3/2} +h^{r-1/2}\|\nabla( \u_\pi-\uh)\|)\|\phi\|_{2}.
\end{align*}
To bound $ II_{\E}$, we rewrite this term as
\begin{align*}
II &=\int_E\diff(u)(\vPo{}-\bm I)\nabla(\uh-\u_\pi)\cdot\nabla( \phi_\pi^1-\phi)\dx
+\int_E\diff(u)(\vPo{}-\bm I)\nabla (\uh-\u_\pi)\cdot\nabla\phi\dx\\
&=\int_E\diff(u)(\vPo{}-\bm I)\nabla(\uh-\u_\pi)\cdot\nabla( \phi_\pi^1-\phi)\dx
+\int_E(\vPo{}-\bm I)(\diff(u)\nabla\phi)\nabla (\uh-\u_\pi)\dx
\end{align*}
Then for $II=\sum_{\E}II_E$, using Theorem \ref{thm:polynomialApproximation}, it immediately follows that
\begin{align*}
| II |\le Ch\|\nabla(\uh-\u_\pi)\|\|\phi\|_{2}.
\end{align*}

Next, we consider the term $III_{\E}$, which can be rewritten as
\begin{align*}
III_{\E}
&=\int_E(\Po{}\uh-\uh)\bar \diff_u[\nabla(\uh-\u_\pi)\cdot\nabla \phi_\pi^1
+\nabla \u_\pi\cdot\nabla \phi_\pi^1]\dx=III_{\E,1}+III_{\E,2}.
\end{align*}
Then using  H\"older inequality \eqref{Holder-ineq}, and we obtain for $III_1=\sum_{\E}III_{\E,1}$
\begin{equation*}
| III_1 |
\le C\|\Po{}\uh-\uh\|_{L_3}\|\nabla \phi_\pi^1\|_{L_6}\|\nabla(\uh- \u_\pi)\|.
\end{equation*}
Hence, following the steps  in the proof of Lemma~\ref{thm:consistency:term I}, and using~\eqref{Sobolev-embed},  we get
\begin{align*}
| III_1 |\le C\| \u_\pi-\uh\|^{1/2}\|\nabla( \u_\pi-\uh)\|^{3/2} \|\phi\|_{2}.
\end{align*}

Next, in view of the fact that $\nabla \u_\pi\cdot\nabla \phi_\pi^1\in \PE{k}$,  we have
\begin{equation}
III_{\E,2}=\int_E(\Po{}\uh-\uh)(\bar\diff_u-c)\nabla \u_\pi\cdot\nabla \phi_\pi^1,\quad\forall c\in\mathbb R.
\end{equation}
Thus, for $III_2=\sum_{\E}III_{\E,2}$, we get
\begin{align*}
| III_2 |&\le Ch\|\uh-\Po{}\uh\|_{L_3}\|\nabla \phi_\pi^1\|_{L_6}\|\nabla \u_\pi\|.
\end{align*}
 Therefore,  Theorem \ref{thm:polynomialApproximation}, and the Sobolev inequalities \eqref{Sobolev-interpolation}, \eqref{Sobolev-embed}, give
\begin{align*}
| III_2 |&\le C
           h\|\uh-\u_\pi\|^{1/2}\|\nabla(\uh-\u_\pi)\|^{1/2}\|\phi\|_{2}.
\end{align*}
Collecting the above bounds, yields for $III=III_{1}+III_{2}$
\begin{align*}
| III |&\le C(
h\|\uh-\u_\pi\|^{1/2}\|\nabla(\uh-\u_\pi)\|^{1/2}
+\|\uh-\u_\pi\|^{1/2}\|\nabla(\uh-\u_\pi)\|^{3/2})\|\phi\|_{2}.
\end{align*}
Therefore
\begin{align*}
|a_h(\uh;\uh,\phi_\pi^1)-a(\uh;\uh,\phi_\pi^1)|&\le 
C(
 h\|\nabla(\uh-\u_\pi)\|
+\|\uh-\u_\pi\|^{1/2}\|\nabla(\uh-\u_\pi)\|^{3/2})\|\phi\|_{2},
\end{align*}
from which the desired bound follows  using once again Theorem~\ref{thm:polynomialApproximation}. 
\end{proof}

Having concluded the proof of Theorem \ref{thm-L2-bound}, in order to show optimal convergence rate of the error in $H^1$ and $L^2$-norms,  it remains to demonstrate that  $\uh$ converge to $u$.

\begin{theorem}\label{thm-converge} 
Under the same assumptions as in Theorems~\ref{thm-H1-bound} and \ref{thm-L2-bound}, the VEM solution $\uh$ converges to the exact solution $u$ in $H^{1}_{0}(\Omega)$.
\end{theorem}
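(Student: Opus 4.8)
The plan is to combine a compactness argument with a bootstrap based on the two preliminary bounds already established. First I would note that the radius $M$ in Theorem~\ref{thm-existence} depends only on $f$ and on the stability constant $\diffStabLower$, not on $h$; hence for all $h<h_M$ the discrete solution satisfies $\uh\in\mathcal B_M$, so $\{\nabla\uh\}_{h<h_M}$ is bounded in $L^2(\Omega)^{\spacedim}$ and, by Poincar\'e, $\{\uh\}$ is bounded in $H^1_0(\Omega)$. Given any sequence $h_n\to0$, reflexivity of $H^1_0(\Omega)$ and the Rellich--Kondrachov theorem provide a subsequence (not relabelled) and $w\in H^1_0(\Omega)$ with $\uh\rightharpoonup w$ weakly in $H^1_0(\Omega)$, $\uh\to w$ strongly in $L^2(\Omega)$, and (up to a further subsequence) $\uh\to w$ a.e.\ in $\Omega$. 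Since $\Po{\k}$ is $L^2$-stable one also gets $\Po{\k}\uh\to w$ in $L^2(\Omega)$ and a.e., so that $\diff(\Po{\k}\uh)\to\diff(w)$ a.e.\ and boundedly, hence in every $L^p(\Omega)$, $p<\infty$, by dominated convergence.

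The heart of the proof is to pass to the limit in~\eqref{vem} and identify $w$ with the solution $u$ of~\eqref{eq:origVariationalForm}. Fix $v\in C^\infty_c(\Omega)$ and let $v_I\in\Vh$ be its interpolant from Theorem~\ref{thm:spaceApproximation}, so $v_I\to v$ in $H^1_0(\Omega)$; then $(\Po{\k-1}f,v_I)\to(f,v)$ because $\Po{\k-1}f\to f$ in $L^2(\Omega)$. For the left-hand side I would split, elementwise and using the symmetry of $\ahE$ in its last two arguments,
\[
\ahE(\uh;\uh,v_I)=\ahE(\uh;\PN{\k}v_I,\uh)+\ahE(\uh;\uh,v_I-\PN{\k}v_I).
\]
The remainder is bounded by the Cauchy--Schwarz inequality for the symmetric positive semi-definite form $\ahE(\uh;\cdot,\cdot)$ together with~\eqref{stability}, giving $|\ahE(\uh;\uh,v_I-\PN{\k}v_I)|\le\diffStabUpper\elipUpper\,\|\nabla\uh\|_\E\,\|\nabla(v_I-\PN{\k}v_I)\|_\E$, which sums to $O(h)$ by Theorems~\ref{thm:polynomialApproximation} and~\ref{thm:spaceApproximation} and the uniform bound $\|\nabla\uh\|\le M$. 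For the main term I use polynomial consistency~\eqref{eq:consistency} and the self-adjointness of $\vPo{\k-1}$ to write $\sum_{\E\in\Th}\ahE(\uh;\PN{\k}v_I,\uh)=\int_\Omega \vPo{\k-1}\big(\diff(\Po{\k}\uh)\nabla\PN{\k}v_I\big)\cdot\nabla\uh\,\dx$. Here $\nabla\PN{\k}v_I\to\nabla v$ strongly in $L^2(\Omega)$, whence $\diff(\Po{\k}\uh)\nabla\PN{\k}v_I\to\diff(w)\nabla v$ and then $\vPo{\k-1}\big(\diff(\Po{\k}\uh)\nabla\PN{\k}v_I\big)\to\diff(w)\nabla v$, both strongly in $L^2(\Omega)$; pairing this with the weak convergence $\nabla\uh\rightharpoonup\nabla w$ shows the integral tends to $(\diff(w)\nabla w,\nabla v)$. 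Passing to the limit in~\eqref{vem} along the subsequence thus gives $(\diff(w)\nabla w,\nabla v)=(f,v)$ for all $v\in C^\infty_c(\Omega)$, hence for all $v\in H^1_0(\Omega)$ by density, so $w$ solves~\eqref{eq:origVariationalForm} and, by uniqueness, $w=u$. Since every sequence $h_n\to0$ admits a subsequence along which $\uh\to u$ in $L^2(\Omega)$, the whole family converges: $\uh\to u$ in $L^2(\Omega)$ as $h\to0$.

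It then remains to bootstrap. By Theorem~\ref{thm-L2-bound}, for $h$ small enough $\|u-\uh\|\le C(h^{r}+\|u-\uh\|^{3})$; since $\|u-\uh\|\to0$, the cubic term can be absorbed for $h$ small, yielding $\|u-\uh\|\le 2Ch^{r}\to0$ (which incidentally recovers the optimal $L^2$-rate). Finally Theorem~\ref{thm-H1-bound} gives $\|\nabla(u-\uh)\|\le C(h^{r-1}+\|u-\uh\|)$, and since $r=\min\{s,\k+1\}\ge2$ the right-hand side tends to zero, so $\uh\to u$ in $H^1_0(\Omega)$.

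The step I expect to be the main obstacle is the passage to the limit in the \emph{inconsistent}, nonlinear form $a_h(\uh;\uh,\cdot)$ when only weak $H^1$-control on $\uh$ is available: the resolution is to transfer the polynomial projections $\Po{\k}$ and $\vPo{\k-1}$ onto the smooth test function, where the approximation results give \emph{strong} $L^2$-convergence of the relevant quantities, and only then to pair with the weak convergence of $\nabla\uh$; the a.e.\ convergence $\Po{\k}\uh\to w$ is what makes $\diff(\Po{\k}\uh)\to\diff(w)$ strongly, which is essential since the nonlinearity is merely continuous.
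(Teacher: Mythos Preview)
Your proof is correct and follows the same overall strategy as the paper: uniform $H^1$-bound from the existence theorem, weak-$H^1$/strong-$L^2$ compactness along a subsequence, identification of the weak limit with the unique solution $u$ by passing to the limit in the discrete scheme tested against smooth functions, and then the bootstrap via Theorems~\ref{thm-L2-bound} and~\ref{thm-H1-bound}.

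The one place where you differ from the paper is in how the limit is taken inside the inconsistent form $a_h(\uh;\uh,\cdot)$. The paper splits $a(z;z,v)-(f,v)$ and leaves the term $|a(\uh;\uh,v_{h_k})-a_h(\uh;\uh,v_{h_k})|$ essentially unjustified (``using the fact that $u_{h_k}\to z$ and $v_{h_k}\to v$''). You instead insert $\PN{\k}v_I$, invoke polynomial consistency~\eqref{eq:consistency} to obtain the computable expression $\int_\Omega \vPo{\k-1}\big(\diff(\Po{\k}\uh)\nabla\PN{\k}v_I\big)\cdot\nabla\uh$, and then pair strong $L^2$-convergence of the projected factor with weak convergence of $\nabla\uh$. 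This is a cleaner and more explicit way to handle precisely the step the paper skips, and it makes transparent why only $L^2$-compactness of $\uh$ (rather than any $H^1$-compactness) is needed at that point.
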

\begin{proof} From Theorem~\ref{thm-existence} it follows that $\|\nabla\uh\|$ is bounded from above.
Therefore, we can choose a subsequence $u_{h_{k}}$ such that for some $z\in H^1_0(\Omega)$, $u_{h_{k}}\to z$, weakly  in $H^{1}_{0}(\Omega)$, as $h_k\to 0$ and, thus, strongly in $L^2(\Omega)$.
Also, for arbitrary $v\in C^{\infty}_{0}(\Omega)$ let $v_{h_k}$ be a sequence in $V_{h_k}$ such that 
\begin{equation}
\|\nabla(v-v_{h_k})\|\to0,\quad h_k\to0.
\end{equation}
Then
\begin{align*}
&|a(z;z,v)-(f,v)|\le |(\diff(z)\nabla z,\nabla(v-v_{h_k})|\\
&\qquad +|(\diff(z)\nabla z,\nabla v_{h_k})-a_h(u_{h_k};u_{h_k},v_{h_k})|+|(\Po{\k-1}f,v_{h_k}-v)| +|\vep(f,v)|\\
&\qquad \le C\|\nabla (v-v_{h_k})\|+|(\diff(z)\nabla z,\nabla v_{h_k})-a_h(u_{h_k};u_{h_k},v_{h_k})|+Ch_k\|f\|_1\|v\|.
\end{align*}
Thus, if 
\begin{equation}\label{w-weak-sol}
|(\diff(z)\nabla z,\nabla v_{h_k})-a_h(u_{h_k};u_{h_k},v_{h_k})|\to0,\quad h_k\to0,
\end{equation}
then $z$ is the weak solution of \eqref{eq:pde}. To show \eqref{w-weak-sol}, we rewrite its left-hand side as
\begin{align*}
&|(\diff(z)\nabla z,\nabla v_{h_k})-a_h(u_{h_k};u_{h_k},v_{h_k})|\\
&\le |(\diff(z)\nabla z-\diff(u_{h_k})\nabla u_{h_k},\nabla v_{h_k})|
+|(\diff(u_{h_k})\nabla u_{h_k},\nabla v_{h_k})-a_h(u_{h_k};u_{h_k},v_{h_k})|\\
&\le C\|\nabla (v-v_{h_k})\|+|(\diff(z)\nabla (z-u_{h_k}),\nabla v)|+|((\diff(z)-\diff(u_{h_k}))\nabla u_{h_k},\nabla v)|\\
&
+|(\diff(u_{h_k})\nabla u_{h_k} ,\nabla v_{h_k})-a_h(u_{h_k};u_{h_k},v_{h_k})|
\end{align*}
Using the fact that $u_{h_k}\to z$, and $v_{h_k}\to v$, we see that \eqref{w-weak-sol} holds.
 Hence $a(z;z,v)=(f,v)$, and thus $u=z$, since $u$ is the unique solution of \eqref{eq:pde}. Then, it follows that 
 $\uh\to u$ in $L^2(\Omega)$. Hence, $\|u-\uh\|\to 0$ and the result follows from Theorems~\ref{thm-L2-bound}, and~\ref{thm-H1-bound}.
 \end{proof}

In view of Theorems \ref{thm-H1-bound}, \ref{thm-L2-bound} and \ref{thm-converge}, the following a priori error estimates now readily follows.

\begin{theorem}
Let $u\in H^1_0(\Omega)$ be the solution of \eqref{eq:pde} and suppose that $u\in H^{s}(\Omega)\cap W^1_\infty (\Omega)$, $s\ge2$, assuming that $f\in H^{s-1}(\Omega)$ and $\diff(u)\in W^{s-1}_{\infty}(\Omega)$.
Let also $\uh\in \Vh$ be the solution of \eqref{vem}.
Then, there exists a constant $C$ independent of $h$ such that, for $h$ sufficiently small, 
 \begin{equation}
 \|u-\uh\|+h\|\nabla(u-\uh)\|\le Ch^{r},
\end{equation}
where $r=\min\{k+1,s\}$.
\end{theorem}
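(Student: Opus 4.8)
The plan is to obtain the optimal rates by combining the three preceding theorems through a standard absorption (bootstrap) argument. First I would invoke Theorem~\ref{thm-converge} to conclude that $\|u-\uh\|\to 0$ as $h\to 0$; in particular there is an $h_0>0$ such that, for all $h<h_0$, we have $C\|u-\uh\|^2\le\tfrac12$, where $C$ denotes the constant appearing in the preliminary $L^2$ bound~\eqref{eq:thm-L2-bound} of Theorem~\ref{thm-L2-bound}. We shall also require $h$ small enough for Theorem~\ref{thm-L2-bound} itself to apply, so we work with $h$ below the minimum of these thresholds.

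Next, for such $h$ I would rewrite the superlinear term in~\eqref{eq:thm-L2-bound} as $\|u-\uh\|^{3}=\|u-\uh\|^{2}\,\|u-\uh\|$ and absorb it into the left-hand side:
\begin{equation*}
\|u-\uh\|\le C h^{r}+C\|u-\uh\|^{2}\,\|u-\uh\|\le C h^{r}+\tfrac12\|u-\uh\|,
\end{equation*}
which yields $\|u-\uh\|\le 2C h^{r}$, the claimed optimal $L^2$-norm estimate. Substituting this into the preliminary $H^1$-norm estimate~\eqref{eq:thm-H1-bound} of Theorem~\ref{thm-H1-bound} gives, for $h\le 1$,
\begin{equation*}
\|\nabla(u-\uh)\|\le C\big(h^{r-1}+\|u-\uh\|\big)\le C\big(h^{r-1}+2C h^{r}\big)\le C' h^{r-1},
\end{equation*}
and multiplying through by $h$ and adding the $L^2$ bound produces $\|u-\uh\|+h\|\nabla(u-\uh)\|\le C h^{r}$, as required.

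Since every ingredient is already in place, there is no substantive obstacle; the only delicate point is bookkeeping the smallness of $h$: the threshold must simultaneously satisfy the hypotheses of Theorems~\ref{thm-L2-bound} and~\ref{thm-converge} and guarantee $C\|u-\uh\|^{2}\le\tfrac12$. This last requirement is precisely where the qualitative convergence furnished by Theorem~\ref{thm-converge} is indispensable: it is what converts the a priori superlinear term $\|u-\uh\|^{3}$ in~\eqref{eq:thm-L2-bound} into a genuinely absorbable quantity, closing the argument.
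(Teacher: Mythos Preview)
Your argument is correct and is precisely the bootstrap the paper has in mind: it merely states that the result ``readily follows'' from Theorems~\ref{thm-H1-bound}, \ref{thm-L2-bound}, and~\ref{thm-converge}, and you have filled in the absorption step exactly as intended.
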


\section{Iteration method}
\label{sec:iteration}

 In this section we show that, given a virtual element space $\Vh$, the sequence of solutions we obtain using fixed point iterations to solve the VEM problem~\eqref{vem} converges to the true solution $\uh\in\Vh$ of \eqref{vem}.
 
Starting with a given $\uh^0\in\Vh$ we construct a sequence $\uh^n$, $n\ge0$,  such that 
\begin{equation}\label{vem-iteration}
a_h(\uh^n;\uh^{n+1},\vh)=(\Po{\k-1} f,\vh), \quad\forall \vh\in V_h. 
\end{equation}

The convergence in $H^1$ of the sequence  $\uh^n$ as $n\to\infty$ to a fixed point of~\eqref{vem-iteration}, and hence a solution of~\eqref{vem},  is an immediate consequence of the following result.
\begin{theorem}\label{thm:convergent-seq}
Let $\{\uh^n\}\subset \Vh$ be the sequence produced in \eqref{vem-iteration}, then 
\begin{equation}
\|\nabla(\uh^n-\uh^{n+1})\|\to0,\quad \text{as }\  n\to \infty.
\end{equation}
\end{theorem}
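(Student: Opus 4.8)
The plan is to show that, on a fixed mesh, the iteration map $T_h\colon\uh^n\mapsto\uh^{n+1}$ defined by \eqref{vem-iteration} contracts the $H^1_0$-seminorm, so that the consecutive differences decay geometrically (and, as a by-product, $\{\uh^n\}$ converges to a solution of \eqref{vem}). First I would note that $T_h$ is well defined and maps $\mathcal B_M$ into itself by the very argument of Theorem~\ref{thm-existence}: testing \eqref{vem-iteration} with $\vh=\uh^{n+1}$, the lower stability bound of Assumption~\ref{ass:bilinearForms} together with $(\Po{\k-1}f,\vh)=(f,\Po{\k-1}\vh)\le\|f\|\,\|\vh\|$ and the Poincar\'e inequality give $\|\nabla\uh^{n+1}\|\le C\|f\|/c_\star\le M$; hence $\|\nabla\uh^n\|\le M$ for all $n$ provided $\uh^0\in\mathcal B_M$.

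Subtracting two consecutive instances of \eqref{vem-iteration} and adding and subtracting $a_h(\uh^n;\uh^n,\vh)$ yields, for every $\vh\in\Vh$,
\[
a_h(\uh^n;\uh^{n+1}-\uh^n,\vh)=\big(a_h(\uh^{n-1};\,\cdot\,,\cdot)-a_h(\uh^n;\,\cdot\,,\cdot)\big)(\uh^n,\vh).
\]
The right-hand side depends only on $\uh^{n-1}$ and $\uh^n$; crucially, it has to be bounded through the \emph{structure} of $a_h$, and not by reinserting the iteration relations, as the latter would merely reproduce the left-hand side. Choosing $\vh=\uh^{n+1}-\uh^n$ and using the lower stability bound, $c_\star\|\nabla(\uh^{n+1}-\uh^n)\|^2$ is bounded by the modulus of the right-hand side evaluated at $(\uh^n,\uh^{n+1}-\uh^n)$.

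To estimate that quantity I would work element by element, much as in the proofs of Lemmas~\ref{thm:consistency:term I}--\ref{thm:consistency:term B}: decompose $\uh^n$ and $\uh^{n+1}-\uh^n$ into their $\PN{\k}$-projections and the corresponding remainders, apply the polynomial consistency \eqref{eq:consistency} to the contributions carrying a polynomial argument and the explicit stabilisation to the remaining one, so that the dependence on the first slot reduces to the factor $\diff(\Po{}\uh^{n-1})-\diff(\Po{}\uh^n)$ (respectively its element-wise analogue in the stabilisation). By the Lipschitz property \eqref{eq:diffLip} this factor is bounded pointwise by $L\,|\Po{}(\uh^{n-1}-\uh^n)|$; combining the $L^2$- and $L^3$-stability of $\Po{}$ and $\vPo{}$, H\"older's inequality \eqref{Holder-ineq}, the Gagliardo--Nirenberg inequality \eqref{Sobolev-interpolation}, the Poincar\'e inequality, the bound $\|\nabla\uh^n\|\le M$, and the inverse estimates available on the (finite-dimensional) space $\Vh$, one obtains a bound of the form $\tilde C\,M\,\|\nabla(\uh^{n-1}-\uh^n)\|\,\|\nabla(\uh^{n+1}-\uh^n)\|$ for the right-hand side, with $\tilde C$ depending on the mesh but not on $n$. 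Therefore $\|\nabla(\uh^{n+1}-\uh^n)\|\le q\,\|\nabla(\uh^n-\uh^{n-1})\|$ with $q:=\tilde C M/c_\star$.

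The main obstacle is to secure $q<1$. As $\tilde C$ carries inverse-estimate factors it does not tend to zero with $h$, so on a fixed mesh this is a smallness condition linking the Lipschitz constant of $\diff$ and the data $f$ to the ellipticity $c_\star$ (recall $M\sim\|f\|/c_\star$); relatedly, the argument requires the stabilisation part of $a_h^\E(\cdot;\cdot,\cdot)$ to depend on its first argument with a quantitative modulus of continuity, which holds for the concrete bilinear form of \sref{sec:num} but is not implied by Assumption~\ref{ass:bilinearForms} alone. Once $q<1$, it follows that $\|\nabla(\uh^n-\uh^{n+1})\|\le q^{\,n}\|\nabla(\uh^0-\uh^1)\|\to0$, which is the claim; furthermore $\{\uh^n\}$ is then a Cauchy sequence in $\Vh$, hence converges to some $\uh\in\Vh$, and passing to the limit in \eqref{vem-iteration} — using that $\z\mapsto a_h(\z;\cdot,\cdot)$ is continuous — identifies $\uh$ as a solution of \eqref{vem}, which is the convergence statement announced before the theorem.
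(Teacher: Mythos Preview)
Your contraction-mapping route is a genuinely different idea from the paper's, and --- as you yourself concede --- it does not prove the theorem as stated: you need a smallness condition $\tilde C M/c_\star<1$ whose constant $\tilde C$ carries inverse-estimate factors (hence cannot be made small on a fixed mesh), and you need $a_h(\z;\cdot,\cdot)$ to depend Lipschitz-continuously on its first argument $\z$, which Assumption~\ref{ass:bilinearForms} does not provide. So as a proof of Theorem~\ref{thm:convergent-seq} the argument is incomplete; at best it yields a conditional result for the specific form in \S\ref{sec:num} under a data-smallness hypothesis.

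The paper bypasses both obstacles with an energy-descent argument that uses only the symmetry and the stability bound~\eqref{stability}. Expanding $a_h(\uh^n;\uh^n-\uh^{n+1},\uh^n-\uh^{n+1})$ and substituting \eqref{vem-iteration} gives
\[
c_\star\|\nabla(\uh^n-\uh^{n+1})\|^2\le \mathcal F(\uh^n)-\mathcal F(\uh^{n+1}),\qquad \mathcal F(v):=a_h(\uh^n;v,v)-2(\Po{\k-1}f,v),
\]
and since $\mathcal F$ is bounded below (by $-\|f\|^2/\elipLower$, via completing the square), the energies form a decreasing, bounded-below sequence whose successive differences tend to zero. No continuity of $a_h$ in its first slot and no smallness of the data are required.

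The trade-off is that, when its hypotheses are available, your contraction argument gives more: geometric decay and convergence of the full sequence $\{\uh^n\}$ to a fixed point, whereas the descent argument establishes only that consecutive iterates become close, not that $\{\uh^n\}$ is Cauchy.
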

\begin{proof}
In view of Assumption \ref{ass:bilinearForms} and the fact that $\ah(\uh^n;\cdot,\cdot)$ is symmetric, we have
\begin{equation}\begin{split}
c_\star\|\nabla(\uh^n-\uh^{n+1})\|^2&\le \ah(\uh^n;\uh^n-\uh^{n+1},\uh^n-\uh^{n+1})\\
&=\ah(\uh^n;\uh^n,\uh^n)-2\ah(\uh^n;\uh^{n+1},\uh^n)+\ah(\uh^n;\uh^{n+1},\uh^{n+1}),
\end{split}\end{equation}
with $c_\star=\elipLower\diffStabLower$.
Then using \eqref{vem-iteration}, we obtain 
\begin{equation*}
\ah(\uh^n;\uh^{n+1},\uh^n)=(\Po{\k-1} f,\uh^n-\uh^{n+1})+\ah(\uh^n;\uh^{n+1},\uh^{n+1}),
\end{equation*}
giving
\begin{equation}\begin{split}
c_\star\|\nabla(\uh^n-\uh^{n+1})\|^2&\le 
\ah(\uh^n;\uh^n,\uh^n)-2(\Po{\k-1} f,\uh^n-\uh^{n+1})-\ah(\uh^n;\uh^{n+1},\uh^{n+1})\\
&=\mathcal{F}(\uh^n)-\mathcal{F}(\uh^{n+1}),
\end{split}\end{equation}
where 
$\mathcal{F}(v)=\ah(\uh^n;v,v)-2(\Po{\k-1} f,v)$.
Therefore, $\mathcal{F}(\uh^n)$ is a decreasing sequence and, in view of the fact that 
\begin{equation}
\mathcal{F}(v)=\ah(\uh^n;v,v)-2(\Po{\k-1} f,v)\ge \elipLower\|\nabla v\|^2-2\| f\|\|\nabla v\|\ge -\dfrac{\|f\|^2}{\elipLower},
\end{equation}
$\mathcal{F}(\uh^n)$ is bounded from below. Therefore $\mathcal{F}(\uh^n)-\mathcal{F}(\uh^{n+1})\to0$, as $n\to \infty$, which completes the proof.
\end{proof}

\section{Numerical results}\label{sec:num}

In order to test the VEM proposed in Section~\ref{sec:vem} we need to specify a bilinear form  satisfying Assumption~\ref{ass:bilinearForms}.  We fix $a_h^\E$ as follows:
\begin{equation*}
	\ahE(\zh;\vh, \wh) = \intE \diff(\Po{}\zh) (\vPo{} \nabla \vh)\cdot (\vPo{} \nabla \uh) \dx+
	\SaE(\zh;(I-\Po{})\vh, (I-\Po{})\wh),
\end{equation*}
with  the VEM stabilising form $S^E$ given by

\begin{equation*}
\SaE(\zh;(I-\Po{})\vh, (I-\Po{})\wh) := {\diff_\E(\Po{0,\E}\zh)} h_{\E}^{\spacedim - 2} 
\overrightarrow{(I-\Po{})\vh}\cdot\overrightarrow{(I-\Po{})\wh}.
\end{equation*}
here, $I$ denotes the identity operator, $\overrightarrow\vh$ is the vector with entries the degrees of freedom of $\vh\in\VhE$, and $\overrightarrow\vh\cdot\overrightarrow\wh$ is the euclidean scalar product of the degrees of freedom of $\vh, \wh\in\VhE$.

The above definition of the local bilinear form extends to the nonlinear setting the one considered in~\cite{Confnonconf} and, similarly to the linear case, it is straightforward to show that it satisfies the stability condition~\eqref{stability}.
Following~\cite{BasicPaper} instead, the projector $\PN{\ell}$ can be used in place of $\Po{}$ in the stabilising term. The practical implementation of these projector operators and VEM assembly are discussed in~\cite{Hitchhikers,Confnonconf}.

In the examples below, approximation errors are measured by comparing the piecewise polynomial quantities $\Po{k}\uh$ and $\Po{k-1}\nabla\uh$ with the exact solution $\u$ and solution's gradient $\nabla\u$, respectively.

The tests are performed using the VEM implementation within the Distributed and Unified Numerics Environment (DUNE) library~\cite{dune}, presented in~\cite{vem-dune}. 

A representative example is shown in Figure~\ref{fig:meshes}. The polygonal mesh was generated using~\cite{polymesher}. 
\begin{figure}[h!]
\hspace{-4 mm}
\includegraphics[scale=0.4]{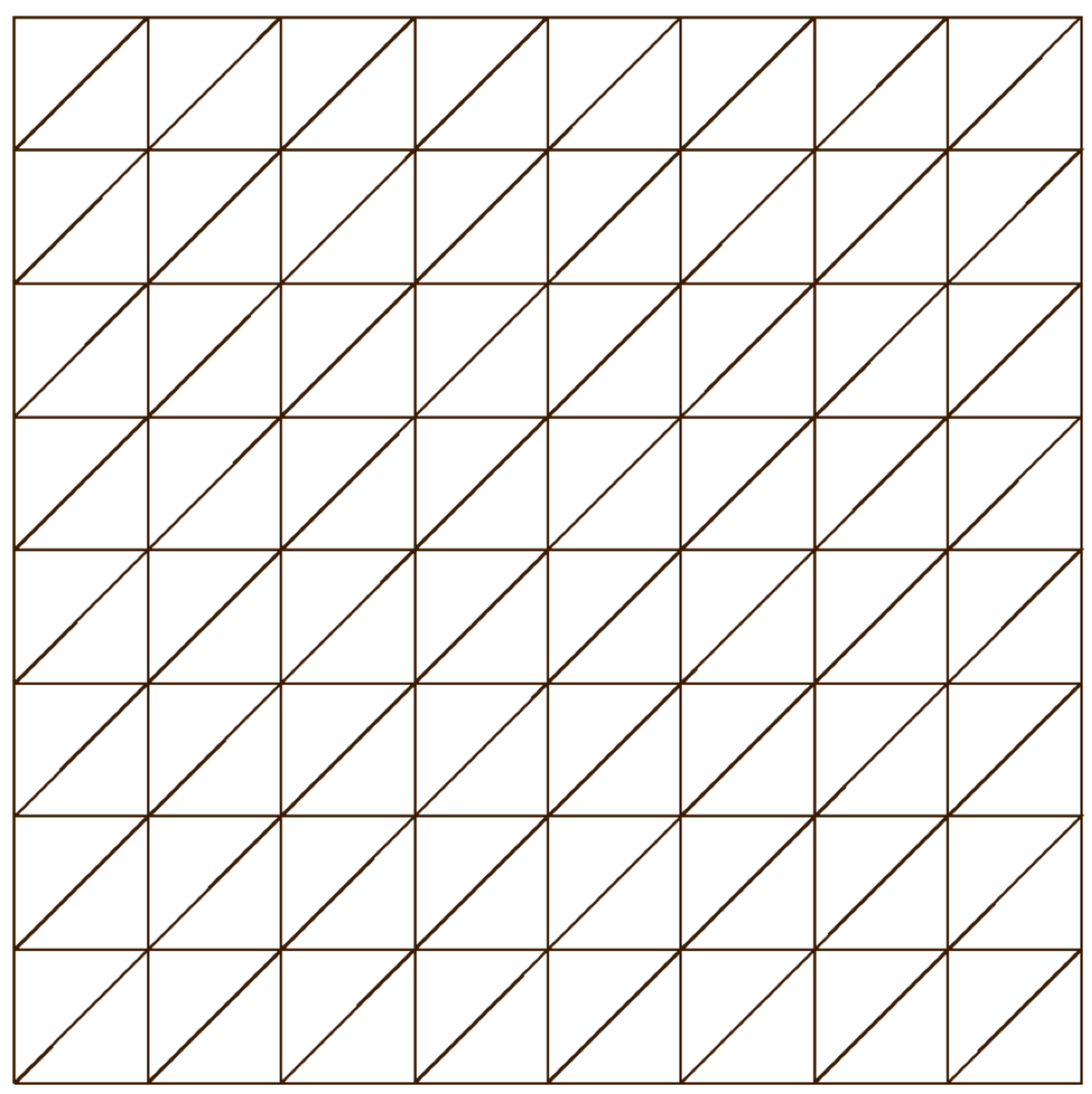}\hspace{5 mm}
\includegraphics[scale=0.4]{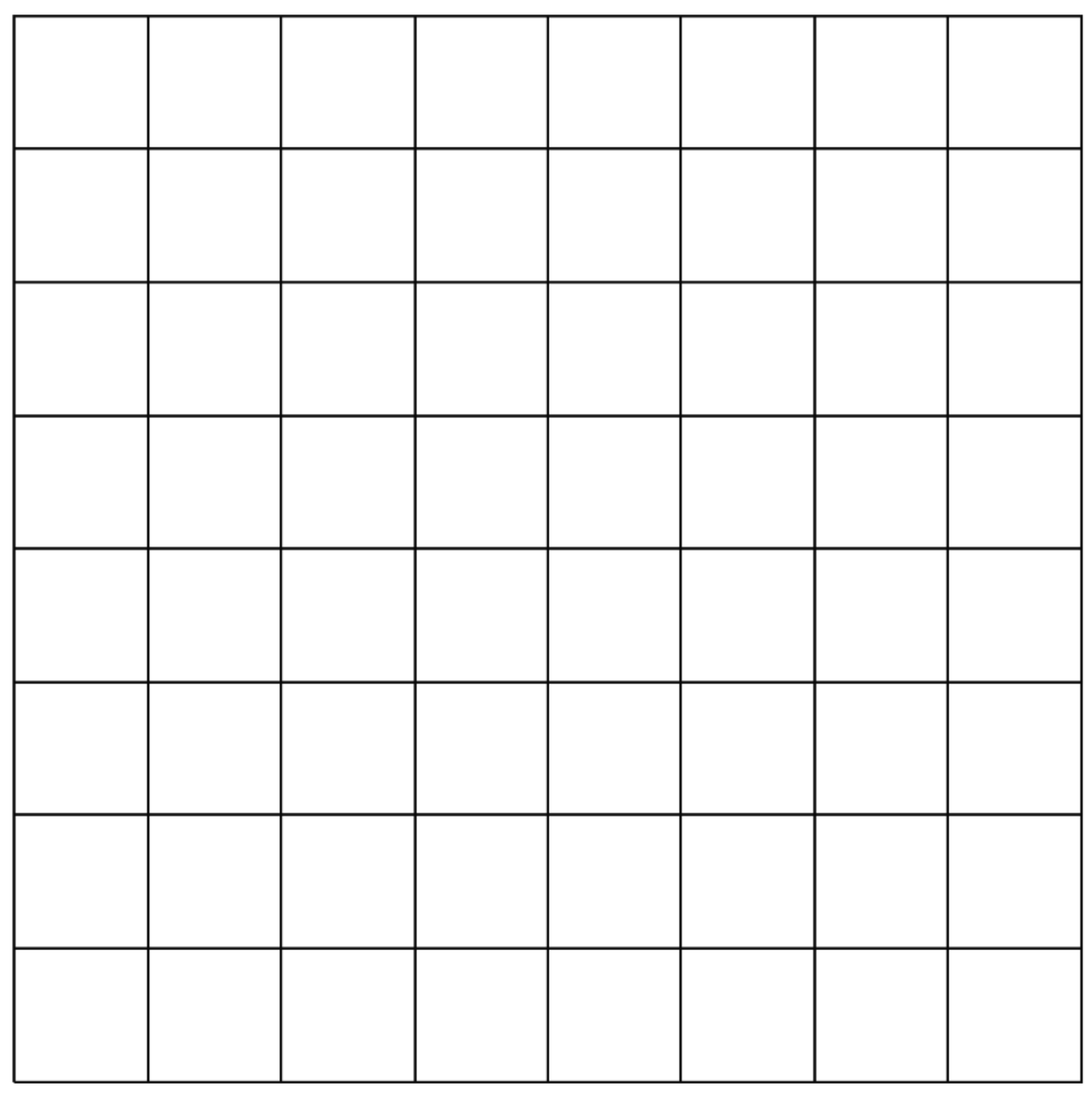}\\
\hspace{-4mm}
\includegraphics[scale=0.405]{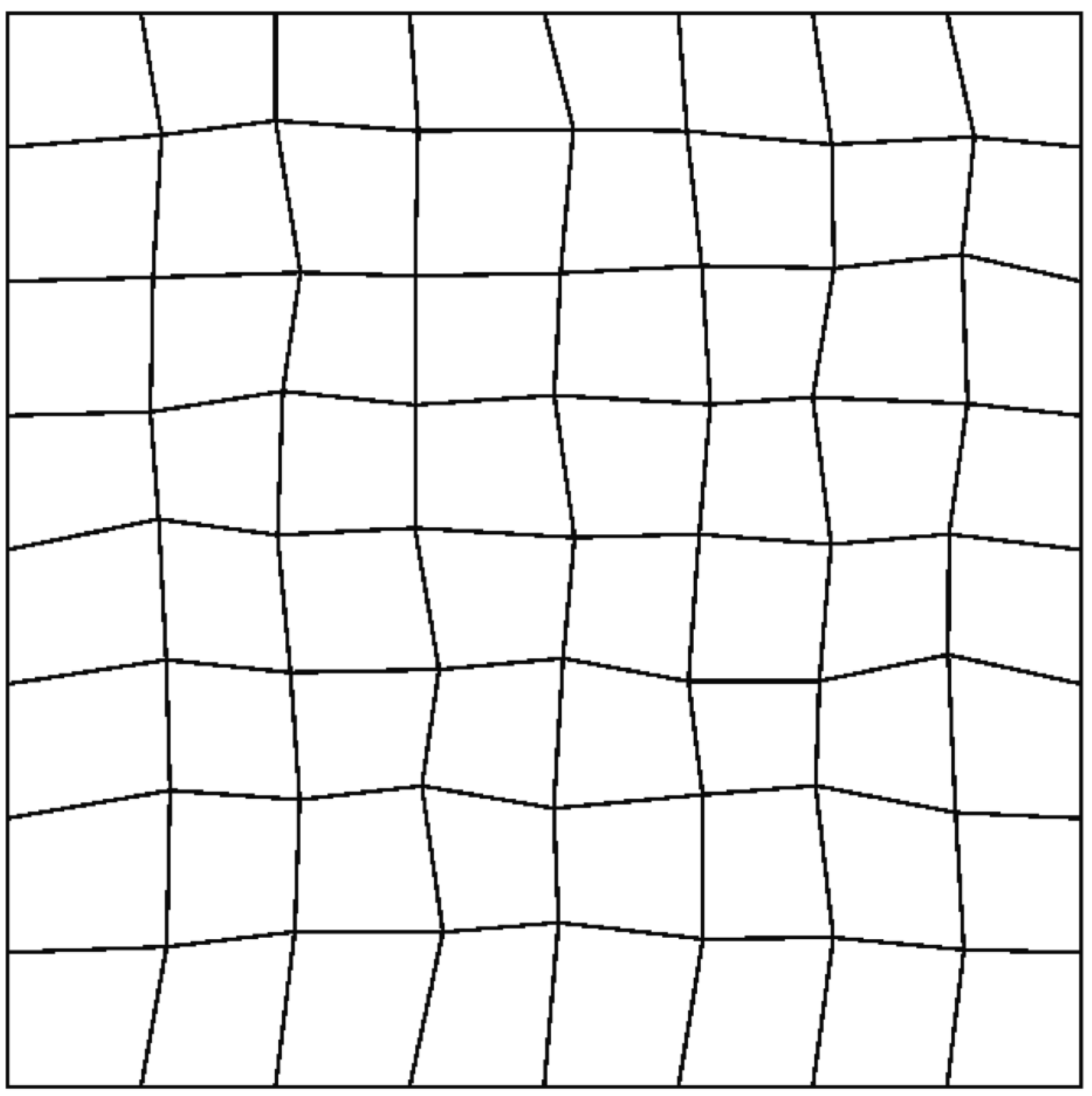}\hspace{5 mm}
\includegraphics[scale=0.65]{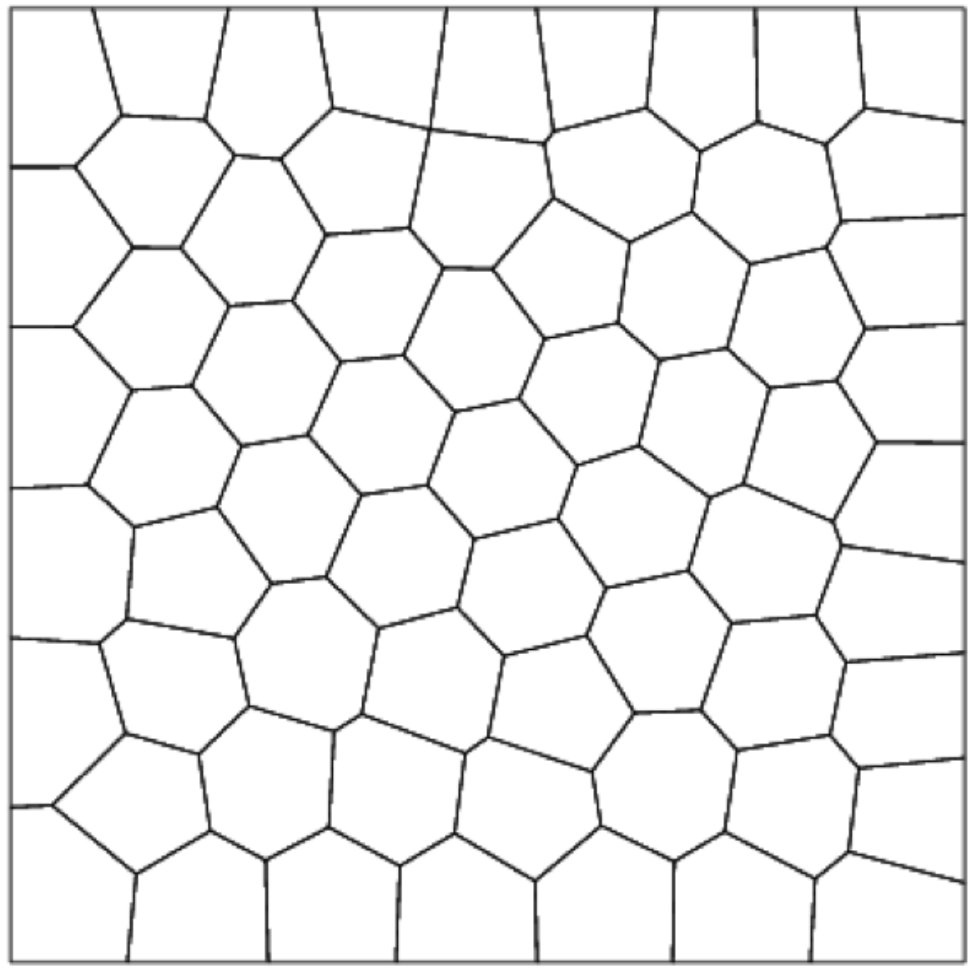}
\caption{Sample meshes used in the numerical test corresponding to an $8\times 8$ subdivision of the domain: triangles, squares, radom quads, and polygons.}
\label{fig:meshes}
\end{figure}
We use fixed point iterations analysed in Section~\ref{sec:iteration} to solve the nonlinear system resulting from the VEM discretisation. This is  compared below with Newton-Raphson iterations, defined as follows. Given an initial iterate $\uh^0\in\Vh$, we construct a sequence $\uh^{n+1}=\uh^n+\delta^n$, $n\ge0$,  by solving at each iteration the linearised problem: find $\delta^n\in\Vh$ such that 
\begin{equation}\label{vem-newton}
a_h(\uh^n;\delta^{n},\vh)+b_h(\uh^n;\delta^{n},\vh)=(\Po{\k-1} f,\vh)-a_h(\uh^n;\uh^n,\vh), \quad\forall \vh\in V_h.
\end{equation}
Here, the extra terms stemming from the linearisation of both the consistency and stability terms in $a_h$ are collected in the global form $b_h:=\sum_{\E\in\Th}b_h^\E$, with the local form $b_h^\E$, $\E\in\Th$, given by 
\begin{align*}
b_h^\E(\uh^n;\delta^{n},\vh)=&
\intE \diff_u(\Po{}\uh^n)\Po{}\delta^k (\vPo{} \nabla \uh^n)\cdot (\vPo{} \nabla \vh) \dx\\
&+h_{\E}^{\spacedim - 2} {\diff_u(\Po{0,\E}\uh^n)} \Po{0,\E} \delta^k\, \overrightarrow{{u_h^n}-\Po{}u_h^n}\cdot \overrightarrow{\vh-\Po{}\vh}.
\end{align*}

{\bf Numerical test.} We consider the following test problem from~\cite{FVEM}. We solve~\eqref{eq:pde} on $\Omega=[0,1]^2$ with $\diff (\u)=1/(1+\u)^2$ and the function $f$ chosen such that the exact solution is $\u=(x-x^2)(y-y^2)$. Note that, although the diffusion coefficient is not even bounded on the whole of $\Re$, it is smooth in a neighbourhood of the range of $\u$. 
As initial guess for the nonlinear solve we use the constant zero function  and the conjugate-gradient method is used to solve the linear system at each iteration. 
The relative errors for the approximation of $\u$ and its
gradient as a function of the mesh size $h$ are shown in Table~\ref{tab:1_poly} for $k=1$ and a sequence of polygonal meshes, cf. the right-most plot in Figure~\ref{fig:meshes}. The numerical results confirm the theoretical rate of convergence. The table also displays the number of fixed point and Newton-Raphson iterations performed until the indicated stopping criteria is reached. 

\begin{table}[h!]\label{tab:1_poly}
\begin{center}
\begin{tabular}{|c|c|c|c|c|c|c|}
\hline
DOF & $\|u-\Po{k}\uh\|$ & EOC & $\|\nabla u-\Po{k-1}\nabla\uh\|$   & EOC & FP & NR \\ \hline
9 & 1.30E-02 & --    & 9.44E-02            & -- & 6 & 4 \\ \hline
34 & 3.40E-03 & 2.018 & 4.96E-02      & 0.967 & 7 & 4 \\ \hline
129 & 8.16E-04 & 2.140 & 2.51E-02    & 1.022 & 6 & 4 \\ \hline
510 & 1.89E-04 & 2.131 & 1.25E-02    & 1.012 & 6 & 4 \\ \hline
2042 & 4.49E-05 & 2.070 & 6.26E-03  & 1.001 & 6 & 3 \\ \hline
8162 & 1.11E-05 & 2.011 & 3.12E-03  & 1.006 & 6 & 3 \\ \hline
\end{tabular}
	\end{center}
\caption{Errors and empirical order of convergence (EOC) on a sequence of polygonal meshes. The  Fixed Point and Newton-Raphson  iterations needed to reach the tolerance $10^{-10}$ are reported in the right-most columns.}
\end{table}

The convergence history with respect to all meshes in Figure~\ref{fig:meshes} are reported in the loglog plots of Figure~\ref{fig:errors} showing that the performance is similar in all cases. Note that, as $k=1$, in the case of the sequence of triangular meshes, the VEM coincides with the standard linear finite element method.
\begin{figure}[h!]
\hspace{-2mm}
\includegraphics[width=6.5cm,height=6cm]{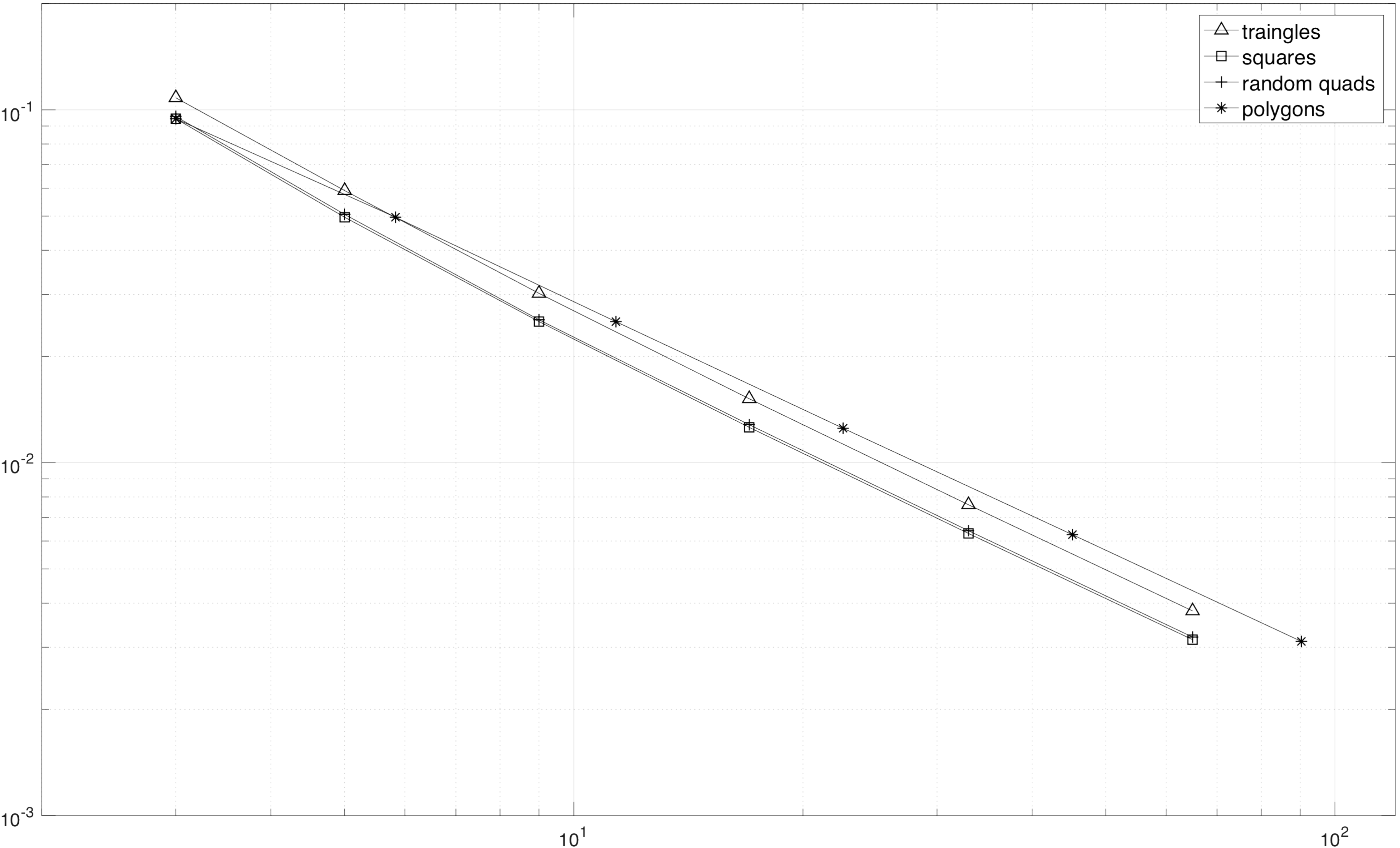}\hspace{5mm}
\includegraphics[width=6.5cm,height=6cm]{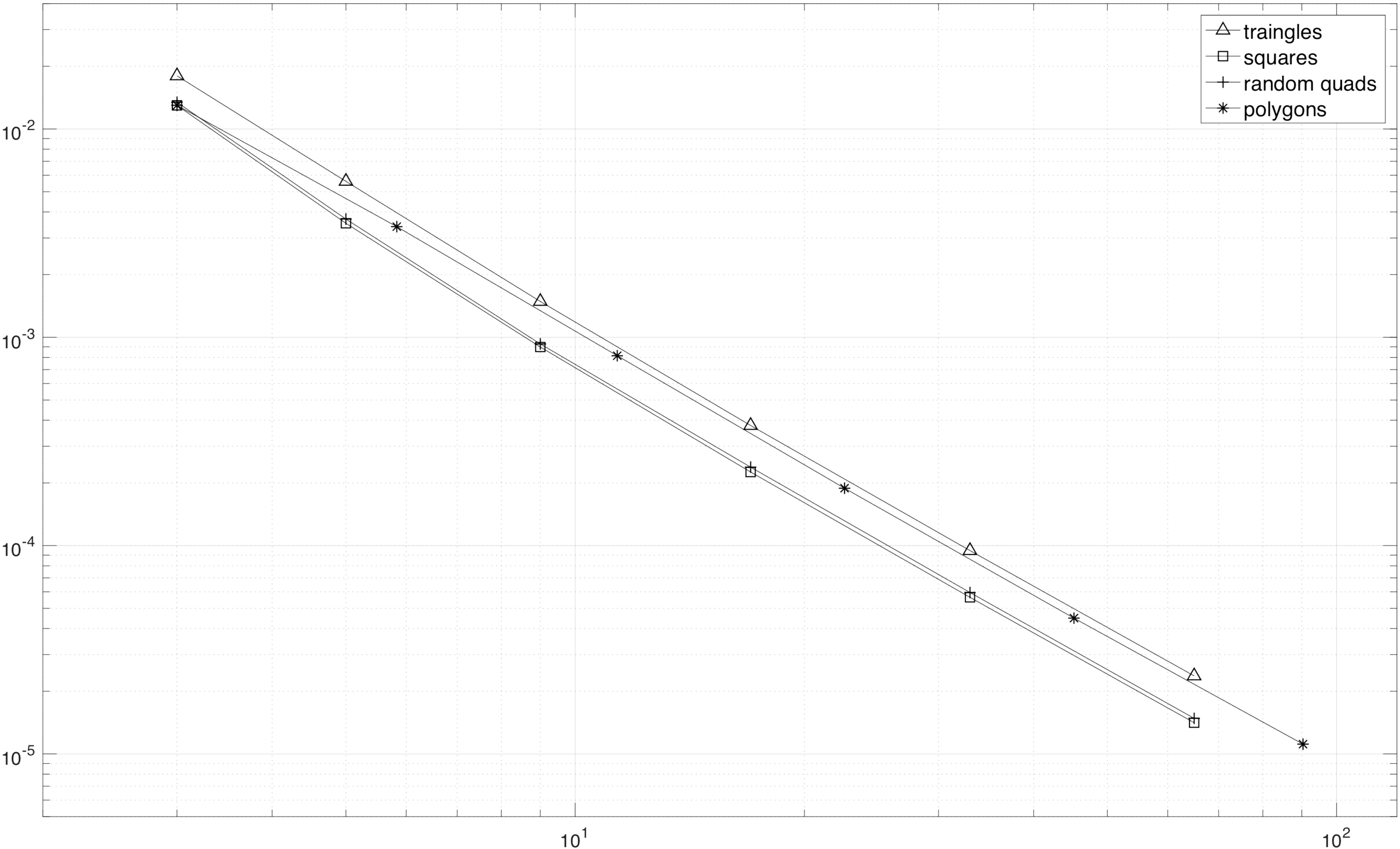}
\caption{Convergence history for $\k=1$ and the sequences of meshes represented in Figure~\ref{fig:meshes}.}
\label{fig:errors}
\begin{picture}(0,0)
\put(-210,95){\rotatebox{90}{{\tiny $\|\nabla u-\Po{k-1}\nabla\uh\|/\|\nabla u\|$ }}}
\put(-5,105){\rotatebox{90}{{\tiny $\|u-\Po{k}\uh\|/\| u\|$ }}}
\put(-110,37){{\tiny $\sqrt{nDoF}$}}
\put(93,37){{\tiny $\sqrt{nDoF}$}}
\end{picture}
\end{figure} 

\section{{Conlusions}}
\label{sec:conc}

With this paper, we show that the Virtual Element Method can be extended  to nonlinear problems. In particular, we consider elliptic quasilinear problems with Lipschitz continuous diffusion in two and three dimensions and show that it suffices to evaluate the diffusion coefficient with the component of the VEM solution which is readily accessible. 
We prove optimal order a priori error estimates under the same mesh assumptions used in the linear setting.

\section*{Acknowledgements}

This research was initiated during the visit of PC to Leicester funded by the LMS Scheme 2 grant (Project RP201G0158). AC was partially supported by the EPSRC (Grant EP/L022745/1).
EHG was supported by a Research Project Grant from The Leverhulme Trust (grant no. RPG 2015-306).
All this support is gratefully acknowledged. We also express our gratitude to Martin Nolte (Albert-Ludwigs-Universit\"at Freiburg) and Andreas Dedner (University of Warwick) for supporting the implementation of the VEM within DUNE-FEM.

\end{document}